\errorcontextlines=5
\documentclass[reqno,twoside,final]{amsart}
\usepackage{amsmath,amstext,amsthm,amsfonts,amssymb,amscd}

\newcommand{\etype}[1]{\renewcommand{\labelenumi}{(#1{enumi})}}
\def\eroman{\etype{\roman}}

\newcommand{\Image}{{\operatorname{Im}\ }}
\newcommand{\Char}{{\operatorname{Char}\ }}
\newcommand{\Cent}{{\operatorname{Cent}}}

\newcommand{\diag}{{\operatorname{Diag}}}

\theoremstyle{definition}

\newtheorem{defn}{Definition}\newtheorem{cor}{Corollary}
\newtheorem{rem}{Remark}
\newtheorem*{rem*}{Remark}
\newtheorem*{acknow*}{Acknowledgements}
\newtheorem*{examples*}{Examples}
\newtheorem{examples}{Example}
\theoremstyle{plain}

\newtheorem{prm}{Problem}
\newtheorem{lemma}{Lemma}
\newtheorem{prop}{Proposition}
\newtheorem{theorem}{Theorem}
\newtheorem*{theorem*}{Theorem}
\newtheorem{conjecture}{Conjecture}

\newenvironment{proof-sketch}{\noindent{\bf Sketch of Proof}\hspace*{1em}}{\qed\bigskip}
\newenvironment{proof-idea}{\noindent{\bf Proof Idea}\hspace*{1em}}{\qed\bigskip}
\newenvironment{proof-of-lemma}[1]{\noindent{\bf Proof of Lemma #1}\hspace*{1em}}{\qed\bigskip}
\newenvironment{proof-of-prop}[1]{\noindent{\bf Proof of Proposition #1}\hspace*{1em}}{\qed\bigskip}
\newenvironment{proof-of-thm}[1]{\noindent{\bf Proof of Theorem #1.}\hspace*{1em}}{\qed\bigskip}
\newenvironment{proof-attempt}{\noindent{\bf Proof Attempt}\hspace*{1em}}{\qed\bigskip}

\def\a{\alpha}
\def\Cent{\operatorname{Cent}}
\def\UD{\operatorname{UD}}
\def\sl{\operatorname{sl}}
\def\bfi{\bold i}
\def\bfj{\bold j}

\pagestyle{myheadings} \markboth{Alexey Kanel-Belov, Sergey Malev,
Louis Rowen}{Multilinear power-central polynomials on matrices}

\begin{document}

\title[Power-central polynomials]{Power-central polynomials on matrices}
\author{Alexey Kanel-Belov, Sergey Malev, Louis Rowen}

\address{Department of mathematics, Bar Ilan University,
Ramat Gan, Israel} \email{beloval@math.biu.ac.il}
\email {malevs@math.biu.ac.il}
\email{rowen@math.biu.ac.il}
\thanks{This research was supported by the
Israel Science Foundation  (grant no. 1207/12).}
\thanks{ The second named author was supported by an Israeli Ministry of Immigrant Absorbtion
scholarship.}

\begin{abstract} Any  multilinear non-central polynomial  $p$ (in several noncommuting
variables) takes on values of degree $n$ in the matrix algebra
$M_n(F)$ over an infinite field $F$. The polynomial  $p$  is
called {\it $\nu$-central} for $M_n(F)$ if $p^\nu$ takes on only
scalar values, with $k$ minimal such. Multilinear $\nu$-central
polynomials do not exist for any $\nu$ with $n>3$, thereby
answering a question of Drensky. Saltman proved that an arbitrary
polynomial
  $p$   cannot be
$\nu$-central for $M_n(F)$ for $n$ odd unless $n$ is prime; we
show for $n$ even, that $\nu$ must be 2.
\end{abstract}


\maketitle

\section{Introduction}

For any polynomial $p\in K\langle x_1,\dots,x_m\rangle$, the
$image$ of $p$ (in $R$), denoted  $\Image p$, is defined as
$$\{r\in R:
\ \text{there exist}\ a_1,\dots,a_m\in R\ \text{such that}\
p(a_1,\dots,a_m)=r\}.$$

\begin{rem}\label{cong}
$\Image p$ is invariant under conjugation, since $$a
p(x_1,\dots,x_m)a^{-1}=p(a x_1a^{-1},a x_2a^{-1},\dots,a_m
a^{-1})\in\Image p,$$ for any nonsingular $a\in M_n(K)$.
\end{rem}

This note is an outgrowth of \cite{BMR1} and \cite{BMR2}, in which
we considered the question, reputedly raised by Kaplansky, of
$\Image p$   on  $n \times n$ matrices. (We take $n$ to be given
throughout this note, as is our base field $K$.)

Specifically, the following conjecture is attributed to Kaplansky:
\begin{conjecture}\label{Polynomial image}
If $p$ is a multilinear polynomial evaluated on the matrix ring
$M_n(K)$, then $\Image p$ is either $\{0\}$, $K$ (viewed as $K$
the set of scalar matrices), $\sl_n(K)$, or $M_n(K)$.
\end{conjecture}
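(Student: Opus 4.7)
The plan is to exploit the two basic symmetries of $\Image p$: it is invariant under conjugation by $\GL_n(K)$ (Remark \ref{cong}) and, by multilinearity, closed under multiplication by arbitrary scalars in $K$ (substitute $cx_i$ for $x_i$). Thus $\Image p$ is a union of conjugation orbits that is also a cone with vertex at $0$. Passing to the Zariski closure, call it $V$, one obtains a closed conical subvariety of $M_n(K)$ invariant under $\GL_n(K)$\dash conjugation; hence $V$ is pulled back from a closed subset $C \subseteq K^n$ via the characteristic coefficient map $\chi \colon A \mapsto (e_1(A), e_2(A), \dots, e_n(A))$, and moreover $C$ is stable under the weighted $K^*$\dash action $t \cdot (c_1,\dots,c_n) = (tc_1, t^2 c_2, \dots, t^n c_n)$.

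Next I would classify the irreducible closed subsets of $K^n$ stable under this weighted action and realizable as $\chi(V)$ for some multilinear $p$. The four cases targeted by the conjecture correspond to: the origin (only nilpotent values, which I would argue forces $p$ to be a polynomial identity of $M_n(K)$, so $\Image p = \{0\}$); the line $c_2 = \cdots = c_n = 0$ (scalar values, so $\Image p = K$); the hyperplane $c_1 = 0$ (trace\dash zero values, i.e.\ $\sl_n(K)$); and all of $K^n$.

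To pin down which case occurs, I would make explicit substitutions: if $p$ is non-central, then by density there exist evaluations on matrix units and diagonal matrices for which $p$ attains both a nilpotent value and a matrix with distinct eigenvalues; by scaling and conjugating these, one generates enough orbits to force $C$ to be the whole ambient space unless $\tr \circ p$ vanishes identically as a function on $M_n(K)^m$, in which case $\Image p \subseteq \sl_n(K)$. One then constructs explicit evaluations realizing a regular semisimple element in the relevant subspace and appeals to density and $\GL_n(K)$\dash conjugation invariance to fill out each orbit.

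The principal obstacle is this last step: $\Image p$ is only a cone of conjugacy orbits, not a vector subspace, so one cannot add two values of $p$ to get a third. Hence even knowing that $\chi(\Image p)$ is Zariski dense in $C$ does not immediately give that a whole orbit lies in $\Image p$; bridging this gap for $n \geq 3$ requires a delicate analysis of the non\dash regular Jordan strata and is precisely what has kept Kaplansky's conjecture open in general.
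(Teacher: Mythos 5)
This statement is labeled \emph{Conjecture} in the paper and is attributed to Kaplansky; the paper does not prove it, and there is no ``paper's own proof'' for me to compare against. The results the paper actually establishes (Theorems \ref{thmB}, \ref{thmC}, \ref{no-mpc}, etc.) are partial progress on the structure of $\Image p$ --- specifically, non-existence of multilinear power-central polynomials for $n\geq 4$ --- not a resolution of Conjecture \ref{Polynomial image}. Your own write-up concedes this in its final paragraph, where you correctly identify the obstruction and note that it ``is precisely what has kept Kaplansky's conjecture open.'' So this is a problem discussion, not a proof, and should not be presented as one.

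Beyond that honest concession, two intermediate steps in your framing are also not correct as stated. First, a $\GL_n(K)$\dash conjugation-invariant closed conical subvariety $V\subseteq M_n(K)$ need not be a pullback $\chi^{-1}(C)$ of a closed set $C\subseteq K^n$: for instance $V=\{0\}$ is conjugation-invariant, closed, and conical, yet $\chi^{-1}(0)$ is the entire nilpotent cone; more generally, non-conjugate matrices share characteristic polynomials (distinct Jordan strata over the same fiber of $\chi$), so invariant closed sets are generally proper subsets of unions of $\chi$-fibers, not equal to them. Second, the assertion that attaining only nilpotent values forces $p$ to be a PI is not justified and is false a priori: by Lemma \ref{graph} a multilinear $p$ evaluated on matrix units may return a nonzero off-diagonal matrix unit such as $e_{12}$, which is nilpotent but nonzero, so ``all values nilpotent'' does not by itself collapse $\Image p$ to $\{0\}$. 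Both issues feed directly into the final gap you already flagged, which is the genuine open difficulty: passing from density of $\chi(\Image p)$ to surjectivity onto whole orbits, since $\Image p$ is only a cone of orbits and is not closed under addition.
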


Here is one celebrated example.

\begin{defn} A polynomial $p$ is \textbf{central} (with respect to $M_n(K)$)  if $p$
takes on only scalar values, but does not vanish identically.
\end{defn}

The existence of multilinear central polynomials for $n\times n$
matrices was proved by Formanek~\cite{F1} and
Razmyslov~\cite{Ra1}.
 Furthermore, it has long been known that $[x_1,x_2]^2$ is
central for $2 \times 2$ matrices, and  one of the possibilities
arising for $3 \times 3$ matrices is a polynomial whose cube is
central, thereby motivating the following definition:

\begin{defn} A polynomial $p$ is \textbf{$\nu$-central} if $p^\nu$
is central, for $\nu\ge 1$ minimal such. The polynomial $p$ is \textbf{power-central} if $p$
is $\nu$-central, for some $\nu >1.$ 
\end{defn}

Our objective here is to examine the existence of $\nu$-central
polynomials. For $n=\nu$ prime, as explained in
\cite[Theorem~3.2.20]{Row1}, this is equivalent to Amitsur's
generic division algebra being cyclic, one of  the major open
questions in the theory of division algebras, and we have nothing
more to say about this case, but we can solve the other cases.

\begin{rem}\label{genericten00} A homogeneous 3-central polynomial for $n=3$ was constructed in
\cite[Theorem~3.2.21]{Row1}, and a homogeneous 2-central
polynomial for $n=4$ was constructed in
\cite[Proposition~3.2.24]{Row1}.
\end{rem}

 Using the structure theory of division algebras,
Saltman~\cite{S} proved that in characteristic 0, $\nu$-central
polynomials do not exist for odd $\nu>1$ unless $n$ is prime. This
led Drensky to ask what happens for $\nu=2$, noting the result of
Remark~\ref{genericten00}. We consider both  the homogeneous case
and the more restrictive multilinear case.

 Our main result is:

 \bigskip
{\bf Theorem \ref{no-mpc}.}
 Assume $n\geq 4.$ For $ \Char(K)$ arbitrary,  there are no multilinear power central polynomials.
 Any multilinear polynomial is either PI, or central, or its image in $M_n(K)$  is at least $(n^2-n+2)$-dimensional.
\bigskip

This implies easily that multilinear $\nu$-central
 polynomials do not exist unless $\nu=n$. One can conclude
 via structure theory that:

(Theorem~\ref{generictena})  4-central polynomials do not exist,
and (Theorem~\ref{thmC}) multilinear 2-central
 polynomials do not exist unless $n=2$.

 Then, by examining dimensions of images, we conclude in Theorem~\ref{no-mpc}, for arbitrary $\nu$, that
multilinear $\nu$-central
 polynomials do not exist whenever $n \ge 4.$

\section{Considerations arising from division algebras}

These questions (for $\nu$ a power of 2) have easy answers for
homogeneous polynomials. Much of the material in this section is
  standard (although we do not have a specific reference),
obtained from well-known facts about division algebras. We say an
element $d$ of a division algebra $D$ with center $F$ is
\textbf{$\nu$-central} if $d^\nu \in F$ but $d^\ell \notin F$ for any
$\ell$ dividing $\nu$, $\nu$ minimal such. We say a subspace $V$ is \textbf{$\nu$-central}
if every element is $\ell$-central for some $\ell$ dividing $\nu$.

 A  major tool is  Amitsur's Theorem \cite[Theorem
3.2.6, p.~176]{Row1}, that the algebra of generic $n\times n$
matrices (generated by matrices $Y_k = (\xi _{i,j}^{(k)})$ whose
entries $\{ \xi _{i,j}^{(k)}, 1 \le i,j \le n\}$ are commuting
indeterminates) is a non-commutative domain $\UD$ whose ring of
fractions with respect to the center is a division algebra which
we denote as $\widetilde{\UD}$ of dimension $n^2$ over its center
$F_1: = \Cent(\widetilde{\UD}))$.

We shall need a general fact about polynomial evaluations.
\begin{lemma}\label{noncom} For any polynomial  $p(x_1, \dots, x_m)$ which has an evaluation of degree $n$ on $M_n(F)$,
there is an index $i,\ 1\le i\le m,$ and matrices $a_1, a_2, \dots, a_m,a_i'$ such that the evaluations
$ p(a_1, \dots, a_{i-1},a_i,a_{i+1},\dots, a_m)$ and $ p(a_1, \dots, a_{i-1},a_i',a_{i+1},\dots, a_m)$ do not commute.
\end{lemma}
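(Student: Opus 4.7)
The plan is to argue by contradiction: suppose that for every index $i$ and every choice $a_1,\dots,a_m,a_i'$ the evaluations $p(a_1,\dots,a_i,\dots,a_m)$ and $p(a_1,\dots,a_i',\dots,a_m)$ commute, and deduce $\Image p\subseteq F\cdot I$. This will contradict the hypothesis, since a matrix of degree $n\ge 2$ is nonscalar.

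Fix $a^{(0)}=(a_1^{(0)},\dots,a_m^{(0)})$ with $A_0:=p(a^{(0)})$ of degree $n$, and set $V_0:=F[A_0]$, the commutative $n$-dimensional centralizer of $A_0$. I claim $\Image p\subseteq V_0$, proved by induction on $i=0,1,\dots,m$: for every $(b_1,\dots,b_i)$ the element $q_i(b_1,\dots,b_i):=p(b_1,\dots,b_i,a_{i+1}^{(0)},\dots,a_m^{(0)})$ lies in $V_0$. For the inductive step, abbreviate $q_{i-1}=q_{i-1}(b_1,\dots,b_{i-1})$ and note that the commutation hypothesis gives $q_i(b_1,\dots,b_i)\in\Cent(q_{i-1})$ for every $b_i$; whenever $q_{i-1}$ itself has degree $n$, the inclusion $q_{i-1}\in V_0$ (by induction) together with $\dim_F\Cent(q_{i-1})=n=\dim_F V_0$ forces $\Cent(q_{i-1})=F[q_{i-1}]=V_0$, so $q_i(b_1,\dots,b_i)\in V_0$ for all $b_i$. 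The ``$q_{i-1}$ has degree $n$'' condition on $(b_1,\dots,b_{i-1})$ is Zariski open in $M_n(F)^{i-1}$ and nonempty (it holds at $(a_1^{(0)},\dots,a_{i-1}^{(0)})$), hence dense; Zariski closedness of $\{(b_1,\dots,b_i):q_i\in V_0\}$ then extends the conclusion to all $(b_1,\dots,b_{i-1})$. Taking $i=m$ gives $\Image p\subseteq V_0$.

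Finally, Remark~\ref{cong} yields $\Image p=g\,\Image p\,g^{-1}\subseteq gV_0g^{-1}$ for every $g\in\GL_n(F)$, so $\Image p\subseteq\bigcap_g gF[A_0]g^{-1}$. Any $B$ in this intersection commutes with every conjugate $gA_0g^{-1}$; for generic $g$ (using that $F$ is infinite and $A_0$ is nonderogatory) the pair $A_0,gA_0g^{-1}$ has no proper nonzero common invariant subspace, so by Burnside's density theorem the $F$-algebra they generate is all of $M_n(F)$, forcing $B\in F\cdot I$. Hence $\Image p\subseteq F\cdot I$, contradicting the nonscalarity of $A_0$. The main obstacle in this approach is the inductive step: the commutation hypothesis controls only single-coordinate changes, so propagating ``$p(\cdot)\in V_0$'' from the single tuple $a^{(0)}$ to all of $M_n(F)^m$ requires a careful interleaving of the algebraic identification $\Cent(q_{i-1})=V_0$ (valid only where $q_{i-1}$ is nonderogatory) with the Zariski density and closedness arguments; once this is in hand, the concluding representation-theoretic step is comparatively routine.
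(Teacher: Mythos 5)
Your argument is correct in its essential ideas and takes a genuinely different route from the paper's. The paper works inside the generic division algebra $\widetilde{\UD}$: since $p$ has a degree-$n$ evaluation, $w=p(Y_1,\dots,Y_m)$ on generic matrices has degree $n$ over the center $F_1$ and therefore generates a \emph{maximal subfield} of $\widetilde{\UD}$; two degree-$n$ elements of a division algebra commute iff they generate the same maximal subfield; walking along the chain $p(Y_1,\dots,Y_i,Y'_{i+1},\dots,Y'_m)$ from $i=m$ to $i=0$, the endpoints generate different subfields, so some consecutive pair fails to commute, and one specializes. You instead avoid division-algebra structure theory entirely: assuming all single-coordinate swaps commute, you propagate $\Image p\subseteq F[A_0]$ across coordinates via centralizers of nonderogatory values, patching the degenerate locus with Zariski density over the infinite base field, and then derive a contradiction from conjugation-invariance of $\Image p$. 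The paper's proof is shorter once the Amitsur/$\widetilde{\UD}$ machinery is granted; yours is more self-contained and linear-algebraic. Both implicitly exploit the same mechanism (the centralizer of a degree-$n$ element is $n$-dimensional and commutative), the paper phrasing it as ``maximal subfield,'' you as $F[q_{i-1}]=V_0$.

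One caveat on your closing step: Burnside's theorem yields $\mathcal{A}=M_n(F)$ from irreducibility only when $F$ is algebraically closed; over a general infinite $F$ the $F$-algebra generated by $A_0$ and $gA_0g^{-1}$ acts irreducibly but could \emph{a priori} be a proper (central simple) subalgebra. This is easily repaired, for instance by base-changing to $\bar F$ (Burnside gives the $\bar F$-algebra they generate is $M_n(\bar F)$, so the $F$-algebra already has dimension $n^2$), or more directly by skipping Burnside altogether: $\Image p\subseteq V_0$ and conjugation-invariance force every conjugate $gA_0g^{-1}$ into the $n$-dimensional commutative algebra $V_0$, hence all conjugates of $A_0$ commute with $A_0$ and with each other; but the span of $\{gA_0g^{-1}\}$ is a $\GL_n(F)$-invariant subspace containing the nonscalar $A_0$, so it cannot sit inside the $n$-dimensional $V_0$ (for $n\ge 3$ already $\dim\ge n^2-n>n$, and $n=2$ is checked directly). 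Apart from this final-step repair, the proposal is sound.
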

\begin{proof} We go back and forth to generic matrices and  $\widetilde{\UD}$. First of all, for all generic matrices $Y_1, \dots, Y_m, Y_1', \dots, Y_m'$,
and each $i$,
clearly $ p(Y_1, \dots,   Y_i,Y'_{i+1},\dots, Y'_m)$ has degree $n$ over $F_1$, and thus has distinct
eigenvalues, from which it follows at once that $ p(Y_1, \dots,  Y_m)$ and $ p(Y_1',\dots, Y'_m)$ do not commute (since
one could diagonalize $ p(Y_1, \dots,  Y_m)$  while $ p(Y_1',\dots, Y'_m)$   remains non-diagonal). 

But, for each $i$,  $F_1( p(Y_1, \dots,  ,Y_i,Y'_{i+1},\dots, Y'_m))$  has dimension $n$  over $F_1$  and thus is a maximal subfield of  $\widetilde{\UD}$. It follows that  $ p(Y_1, \dots, Y_i,Y'_{i+1},\dots, Y'_m) $ and $ p(Y_1, \dots, Y_{j-1},Y'_{j},\dots, Y'_m)$ commute iff  $$F_1( p(Y_1, \dots, Y_i,Y'_{i+1},\dots, Y'_m)) = F_1( p(Y_1, \dots, Y_{j-1},Y'_{j},\dots, Y'_m)).$$ In particular,  $F_1(  p(Y_1, \dots,  Y_m))  \ne F_1( p(Y'_1, \dots,  Y'_m) )$,
implying  $$F_1( p(Y_1, \dots, Y_i,Y'_{i+1},\dots, Y'_m))\ne F_1( p(Y_1, \dots, Y_{i-1},Y'_{i},\dots, Y'_m))$$ for some $i$, and thus  
$ p(Y_1, \dots, Y_i,Y'_{i+1},\dots, Y'_m) $ and $ p(Y_1, \dots, Y_{i-1},Y'_{i}\dots, Y'_m)$ do not commute. In other words,
$$[ p(Y_1, \dots, Y_i,Y'_{i+1},\dots, Y'_m) , p(Y_1, \dots,Y_{i-1},Y'_{i}\dots, Y'_m)] \ne 0,$$ implying there is a specialization 
$$Y_1 \mapsto a_1, \dots, Y_i \mapsto a_i  ,   Y_i' \mapsto a_i', Y'_{i+1} \mapsto a_{i+1}, \dots, Y'_m \mapsto a_m$$ 
yielding $[ p(a_1, \dots, a_{i-1},a_i,a_{i+1},\dots, a_m),p(a_1, \dots, a_{i-1},a_i',a_{i+1},\dots, a_m)] \ne 0.$ 
\end{proof}

We return to power-central polynomials.

\begin{rem}\label{genericten0}
The existence of a $\nu$-central polynomial is equivalent to $\UD$
containing an element whose $\nu$-power is central, with $\nu$ minimal
such. On the other hand, any such element can be specialized to an
arbitrary division algebra of dimension $n^2$ over its center.
Thus, to prove the non-existence of a $\nu$-central polynomial, it
suffices for suitable $\ell $ dividing $\nu$ to construct a division
algebra with center $F \supseteq K$ having the property that if
$d^\nu \in F$ then $d^\ell \in F.$
\end{rem}

\begin{examples}\label{genericten} Suppose $K$ has characteristic $\ne 2$, and $n = 2^{t-1} q$ where $q$ is odd, and
construct $D$ to be a tensor product of ``generic'' symbols
$(\lambda_u^{n_u} , \mu_u^{n_u} )$ as in \cite[Example~7.1.28 and
Theorem~7.1.29]{Row1}, where  $n_1 = n_2 = \dots = n_{t-1} = 2$
and $n_t = q.$ In other words, $D$ is the algebra of central
fractions of the skew polynomial ring $R :=
K(\rho)[\lambda_u\mu_u: 1 \le u \le t]$ where $\rho$ is a
primitive $q$ root of 1 and the indeterminates commute except for
$\lambda_u \mu_u = - \mu_u \lambda_u$ for $1 \le u \le t-1$ and
$\lambda_t \mu_t =\rho \mu_t \lambda_t$. We write a typical
element of $R$ as $\sum \a _{\bfi,\bfj} \lambda ^{\bfi} \mu
^{\bfj}$, where $\lambda ^{\bfi}$ denotes $\prod_{u=1}^t
\lambda_u^{i_u}.$ There is a natural grade given by the
lexicographic order on the exponents of the monomials, and it is
easy to see that if $d^\nu \in F$ then the leading term $\hat d^\nu
\in F.$

In particular, for $\nu =2,$ if $d^2 \in F$ then $\hat d$ must have
the form $\a \lambda ^{\bfi} \mu ^{\bfj}$ where $i_t = j_t = 0.$
On the other hand, we claim that if $ d ^2 \in F$ and $\hat d \in
F$, then $d \in F.$ Indeed, taking $d'$ to
 be the next leading term in $d$, we   have
 $$d^2 = (\hat d + d')^2 = \hat d^2 + 2\hat d d' + \cdots,$$
 implying $d' \in F$, and continuing, we   conclude $d\in F$,
as desired.

 It follows that if $d$ is $2$-central then $\hat d$ is $2$-central.

 Now we claim that $D$ does not have $4$-central elements. Indeed,
 if $d$ is 4-central   then $d^2$ is $2$-central, implying $\hat d^2$ is
 $2$-central,
 and thus $\hat d$ is $4$-central, implying $\hat d$ must have
the form $\a \lambda ^{\bfi} \mu ^{\bfj}$ where $i_t = j_t = 0;$
we conclude that $\hat
 d$ is is $2$-central, implying $\hat d^2 \in F,$ and thus $d^2 \in
 F$ by the claim.
\end{examples}

\begin{theorem}\label{generictena} There do not exist $4$-central
polynomials.
\end{theorem}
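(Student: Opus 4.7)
The plan is to derive Theorem~\ref{generictena} as an almost immediate corollary of Remark~\ref{genericten0} applied to the division algebra already constructed in Example~\ref{genericten}. By Remark~\ref{genericten0}, it suffices to exhibit a single division algebra $D$ of dimension $n^2$ over its center $F \supseteq K$ with the property that $d^4 \in F$ forces $d^2 \in F$ for every $d \in D$; this is exactly the instance $\ell = 2$ of the remark with $\nu = 4$.

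If $n$ is odd the statement is trivial, since any $d \in D$ with $d^2 \in F$ has $F[d]$ of degree dividing both $2$ and $n$, forcing $d \in F$. So I focus on $n$ even, write $n = 2^{t-1} q$ with $q$ odd and $t \ge 2$, and take $D$ to be the generic symbol algebra of Example~\ref{genericten}. Suppose for contradiction that $d \in D$ is $4$-central. Then $d^2$ is $2$-central, and by the standard monomial grading the leading term of $d^2$ equals $(\hat d)^2$, which must therefore also be $2$-central; this forces $\hat d$ itself to be $4$-central. The calculation of Example~\ref{genericten} shows that such a $\hat d$ has the form $\a\,\lambda^{\bfi}\mu^{\bfj}$ with $i_t = j_t = 0$, so $\hat d$ lies in the sub-tensor-product over $u < t$, where the anticommuting relations $\lambda_u \mu_u = -\mu_u\lambda_u$ already force $\hat d^2 \in F$. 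Hence $\hat d$ is $2$-central rather than $4$-central; applying the auxiliary claim of Example~\ref{genericten} (that $d^2 \in F$ together with $\hat d \in F$ implies $d \in F$) to $d^2$ in place of $d$ then yields $d^2 \in F$, contradicting the assumed $4$-centrality of $d$.

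The main obstacle is characteristic $2$, which Example~\ref{genericten} explicitly excludes: the symbol $(-1,-1)$ degenerates once $-1 = 1$, and the inductive ``remove the leading term'' step in the auxiliary claim uses division by $2$ in the expansion $(\hat d + d')^2 = \hat d^2 + 2\hat d\, d' + d'^2$. To handle this case I would replace the $2$-primary factors of $D$ by generic Artin--Schreier symbol algebras and redo the leading-term analysis in the corresponding iterated twisted polynomial ring. The delicate step is reproving the analog of ``$d^2 \in F$ and $\hat d \in F$ imply $d \in F$'' in characteristic $2$, where the cross-term $\hat d\, d' + d'\, \hat d$ no longer automatically dominates $d'^2$ and must be controlled by hand.
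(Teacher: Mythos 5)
Your argument is correct and is essentially the paper's own proof unpacked: the paper proves Theorem~\ref{generictena} in one line by combining Remark~\ref{genericten0} (here with $\nu=4$, $\ell=2$) with the leading-term analysis of Example~\ref{genericten}, which is exactly what you reproduce. Your separate handling of odd $n$ via $\gcd(4,n)=1$ is a small shortcut; the Example already covers that case by taking $t=1$.

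Your last paragraph identifies a real point: Example~\ref{genericten} explicitly assumes $\chara K\ne 2$, and the paper's one-line proof of Theorem~\ref{generictena} silently inherits that restriction, so the characteristic~2 case is not actually addressed in the paper either. Your proposed fix (replacing the $2$-primary symbol factors by generic Artin--Schreier symbol algebras and redoing the ``$d^2\in F$ and $\hat d\in F$ imply $d\in F$'' step) is a sensible direction but is not carried out, so your writeup leaves that case open in the same way the paper does; this is fair to flag but does not constitute a complete proof beyond what the paper itself claims.
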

\begin{proof} Combine Remark~\ref{genericten0}
with Example ~\ref{genericten}.\end{proof}

This leaves us with 2-central polynomials.

\begin{prop}\label{generictena1} There   exist homogeneous 2-central
polynomials with respect to $M_n(F)$ if  $n = 2q$ or  $n = 4q$ for
$q$ odd.
\end{prop}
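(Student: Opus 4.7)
The plan is to exhibit inside the generic division algebra $\widetilde{\UD}_n$ of degree $n$ over its center $F_1$ an element outside $F_1$ whose square lies in $F_1$; by Remark~\ref{genericten0} this suffices to produce a $2$-central polynomial on $M_n(K)$. Writing $n = 2^a q$ with $a \in \{1,2\}$ and $q$ odd, and assuming $\chara K \ne 2$, the coprimality $\gcd(2^a, q) = 1$ combined with $\mathrm{ind}(\widetilde{\UD}_n) = n$ lets us invoke the primary decomposition of central simple algebras, producing an $F_1$-algebra isomorphism
$$\widetilde{\UD}_n \;\cong\; B \otimes_{F_1} B',$$
where $B$ is a central division algebra of degree $2^a$ and $B'$ a central division algebra of degree $q$. (Both factors must themselves be division, since the index of a tensor product of central simple algebras of coprime index is the product of the indices, which here forces each index to equal the respective degree.)

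It now suffices to find $\alpha \in B \setminus F_1$ with $\alpha^2 \in F_1$: then $\alpha \otimes 1 \in \widetilde{\UD}_n$ is not central, while $(\alpha \otimes 1)^2 = \alpha^2 \otimes 1 \in F_1 \subseteq \Cent(\widetilde{\UD}_n)$, so it is $2$-central. For $a = 1$, $B$ is a quaternion division algebra and any ``pure quaternion'' (e.g.\ the standard generator $i$ with $i^2 \in F_1^\times$) does the job. For $a = 2$, $B$ has degree $4$, and I would apply Albert's crossed-product theorem: every degree-$4$ central simple algebra over a field of characteristic $\ne 2$ contains a Galois maximal subfield $L/F_1$ with Galois group $\mathbb{Z}/4$ or $(\mathbb{Z}/2)^2$. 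Either group has a subgroup of order $2$, whose fixed field is a quadratic subextension $F_1(\beta) \subseteq L$; completing the square then replaces $\beta$ by some $\alpha \in F_1(\beta)$ with $\alpha^2 \in F_1$ and $\alpha \notin F_1$.

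To convert $\alpha$ into an actual polynomial, one passes from $\widetilde{\UD}_n$ back to the generic matrix ring $\UD_n$ by clearing a central denominator: writing $\alpha = c^{-1}\tilde\alpha$ with $c \in Z(\UD_n)^\times$, the element $\tilde\alpha = c\alpha \in \UD_n$ remains $2$-central because $c$ is central. Homogeneity is secured via the natural multi-grading on $\UD_n$ induced by the scaling action $Y_k \mapsto t_k Y_k$: this torus action preserves both the center and the $2$-central locus, so an averaging argument (or passage to a top multi-degree component whose square is still forced into the center) produces a multi-homogeneous $2$-central element, hence a homogeneous $2$-central polynomial. The genuine technical obstacle is the $n = 4q$ case, which rests on Albert's deep structural result that every degree-$4$ central simple algebra contains a quadratic subfield; the $n = 2q$ case and all the Brauer-theoretic bookkeeping are routine by comparison.
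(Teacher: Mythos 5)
Your proof is correct and takes essentially the same route as the paper's: decompose $\widetilde{\UD}$ into its $2$\dash primary and odd parts $D_1\otimes_{F_1} D_2$ with $\deg D_1\in\{2,4\}$ and $\deg D_2=q$, find a square-central element of $D_1\setminus F_1$, and tensor with $1$. The paper simply points back to Remark~\ref{genericten00} (Rowen's explicit homogeneous $2$-central polynomial for $n=4$, together with $[x_1,x_2]$ for $n=2$) in place of your appeal to Albert's crossed-product theorem, and it leaves the descent from $\widetilde{\UD}$ to $\UD$ and the homogenization step implicit, whereas you spell them out; your top-multi-degree argument is sound once one notes that, in characteristic $\ne 2$, if the leading homogeneous component of a $2$-central element were central then a degree-by-degree induction would force every component to be central.
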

\begin{proof} $\widetilde{\UD}$ is a tensor product of a division algebra $D_1$ of degree 2 or  4
and a division algebra of degree $q$, and we observed earlier that
$D_1$ has a $2$-central element. \end{proof}

The situation for $8|n$ remains open, and is equivalent to another
important question in division algebras about the existence of
square-central elements. The following observation might be
relevant, although we do not use it further.

\begin{lemma} Suppose $R = M_2(D)$ with $D$ an $F$-central division algebra.
The 2-central elements of $R$ have the form
\begin{equation}\label{2pol0}\left(
\begin{matrix} a & b
\\ -b^{-1}(a^2 - \a) &  -b^{-1}a b \end{matrix}\right) : \a \in F, a, b \in
D.\end{equation}
\end{lemma}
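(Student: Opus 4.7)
The plan is to analyse the condition $x^2 = \alpha I$ entry-by-entry. Writing a general element of $R = M_2(D)$ as $x = \left(\begin{smallmatrix} a & b \\ c & d\end{smallmatrix}\right)$ with $a,b,c,d\in D$, one computes
\[
x^2 = \left(\begin{matrix} a^2 + bc & ab + bd \\ ca + dc & cb + d^2 \end{matrix}\right),
\]
so demanding $x^2 = \alpha I$ for some $\alpha \in F$ produces four equations: the off-diagonal ones $ab + bd = 0$ and $ca + dc = 0$, together with the two diagonal identities $a^2 + bc = \alpha = cb + d^2$.

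Since $D$ is a division algebra, $b$ is either zero or invertible, and the formula in the lemma parametrises the case $b \ne 0$, which I would handle first. The equation $ab + bd = 0$ then solves uniquely for $d$ as $d = -b^{-1}ab$, and the equation $a^2 + bc = \alpha$ solves uniquely for $c$ as $c = -b^{-1}(a^2 - \alpha)$. Inserting these into $x$ reproduces exactly the matrix displayed in the statement, and exhibits the 2-central elements with $b \ne 0$ as being parametrised by triples $(a, b, \alpha) \in D \times D^{\times} \times F$.

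The remaining step is to check that the two other equations, $ca + dc = 0$ and $cb + d^2 = \alpha$, follow automatically from the formulas for $c$ and $d$. Direct substitution shows that $ca + dc$ collapses to $b^{-1}[\alpha a - a\alpha]$ and $cb + d^2$ collapses to $b^{-1}\alpha b$; both reduce to the desired values ($0$ and $\alpha$ respectively) precisely because $\alpha$ is central in $R$, hence commutes with $a$ and $b$. This centrality is where the hypothesis $\alpha \in F$, rather than $\alpha \in D$, is actually used.

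I do not foresee a genuine obstacle; the $b \ne 0$ branch is essentially forced at every step. The only subtlety worth flagging is the degenerate case $b = 0$: the four equations then decouple, and the 2-central elements split into block-diagonal pieces coming from 2-central elements of $D$ itself; such elements lie outside the parametrisation of the lemma and would be listed separately if a complete classification were required.
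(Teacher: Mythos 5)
Your argument is correct and, while it runs through the same four equations as the paper (obtained by expanding $x^2 = \alpha I$ entrywise), it uses them in a cleaner order. The paper solves equation (i) for $c$, substitutes into (ii) to get $d^2 = (b^{-1}ab)^2$, and then asserts that $d = \pm b^{-1}ab$; but this deduction is not valid in a general division algebra, since $d^2 = e^2$ does not force $d = \pm e$ when the ring is noncommutative (think of $i$ and $j$ in the quaternions). You avoid this entirely by reading $d = -b^{-1}ab$ directly off the off-diagonal equation $ab + bd = 0$, which is a genuine improvement in rigor. Your verification that the remaining two equations collapse to $b^{-1}[\alpha a - a\alpha] = 0$ and $b^{-1}\alpha b = \alpha$ checks out, and correctly isolates exactly where $\alpha \in F$ (rather than merely $\alpha \in D$) is used. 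You are also right to flag the degenerate case $b = 0$: the paper's proof implicitly assumes $b$ invertible by writing $b^{-1}$, so as stated the lemma only parametrizes the square-central elements with nonzero $(1,2)$ entry; block lower-triangular square-central elements (e.g. $\operatorname{diag}(a,-a)$ with $a \in F$, or strictly lower-triangular nilpotents) fall outside the displayed form. This is a minor but real omission in the source that your treatment correctly identifies.
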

\begin{proof} Consider any square-central matrix $A =\left(
\begin{matrix} a & b
\\ c &  d \end{matrix}\right)$.
Then $$A^2 =\left(
\begin{matrix} a^2 + bc & ab + bd
\\ ca + dc & cb + d^2 \end{matrix}\right),$$
so we must have $\a \in F$ such that:
\begin{enumerate}\eroman
\item $a^2 + bc = \a$; \item $ cb + d^2 = \a$; \item $ab + bd =
0;$ \item $ca + dc = 0.$
\end{enumerate}

(i) implies $c = -b^{-1}(a^2 - \a) $. Then  (ii) implies

$$d^2 = \a + b^{-1}(a^2 - \a) ) b = b^{-1}a^2 b  = (b^{-1}a
b)^2,$$ so $d = \pm b^{-1}a b.$ But only $d = - b^{-1}a b$ works.
Thus the matrix is in the form of \eqref{2pol0}, and this is
indeed square-central.
\end{proof}

Note that this enables one to construct ``generic'' 2-central
elements, by choosing $a$ and $b$ and $\a$ arbitrarily, but we do
not know if they occur as elements of $\widetilde{\UD}.$

\section{Multilinear $2$-central polynomials}

Having  settled the issue for homogeneous polynomials except for
$n = 8q$, we turn to multilinear polynomials, where the story ends
differently. Although we will get a more general result, it is
instructive to start with the division algebra approach.

\begin{lemma} Any division algebra $D$ with a 2-central subspace $V$ of dimension 2
contains   an $F$-central quaternion subalgebra. In particular, $n
: = \deg(D)$ cannot be odd. Also, 4 does not divide $n$ if $D$
also has exponent $n$.
\end{lemma}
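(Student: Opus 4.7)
\begin{proof-sketch}

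The plan has three stages: first I extract a pair of anticommuting square-central elements from $V$, then show that they generate a $4$-dimensional quaternion subalgebra, and finally draw the degree and exponent conclusions from the double centralizer theorem.

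\textbf{Stage 1.} Pick two $F$-linearly independent $2$-central elements $u,v\in V$ with $u,v\notin F$. Expanding $(u+v)^2\in F$ and using $u^2,v^2\in F$ immediately gives $uv+vu\in F$. Assuming $\Char F\ne 2$, I replace $u$ by $u':=u-\tfrac{uv+vu}{2v^2}v$ (legal since $v^2\in F^\times$); this keeps $u'\in V$ and yields $u'v+vu'=0$, while the $F$-linear independence of $u,v$ guarantees $u'\notin F$. Set $\alpha:=u'^2$ and $\beta:=v^2$ in $F$.

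\textbf{Stage 2.} I would verify that $1,u',v,u'v$ are $F$-linearly independent, so that $Q:=F\langle u',v\rangle$ is the quaternion algebra $(\alpha,\beta)_F$. The key point is that $F[u']$ is a quadratic field extension of $F$ (because $\alpha$ is not a square in $F$; otherwise $x^2-\alpha$ would factor and $u'\in F$ inside the division ring $D$). Then $v\notin F[u']$: matching $v=a+bu'$ against $v^2\in F$ forces $ab=0$, placing $v$ in $F$ or in $Fu'$, both contradictory. As a $4$-dimensional subring of the division algebra $D$, the subalgebra $Q$ is itself a quaternion division algebra with center $F$.

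\textbf{Stage 3.} The double centralizer theorem gives $D\cong Q\otimes_F C_D(Q)$, so $n^2=4\cdot\dim_F C_D(Q)$ forces $n$ to be even. For the exponent statement, $[D]=[Q]+[C_D(Q)]$ in the Brauer group, so $\exp D$ divides $\mathrm{lcm}(\exp Q,\exp C_D(Q))=\mathrm{lcm}(2,\exp C_D(Q))$, and $\exp C_D(Q)$ divides $\deg C_D(Q)=n/2$. If $4\mid n$ then $n/2$ is even, so this lcm is $n/2$; combined with $\exp D=n$, this yields the contradiction $n\le n/2$.

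The delicate point is Stage 2, where one must rule out any collapse of $Q$ to a smaller subalgebra; this is precisely where the division-algebra hypothesis is essential, via the non-squareness of $\alpha$. A minor subtlety is the $\Char F=2$ case, which would require substituting the alternative quaternion presentation $[a,b)_F$ in place of symmetrization, but the preceding section of the paper already restricts to $\Char F\ne 2$, so this does not arise here.

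\end{proof-sketch}
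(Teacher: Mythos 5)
Your proof is correct and follows essentially the same route as the paper's: both expand $(u+v)^2\in F$ to obtain $uv+vu\in F$, build the resulting $4$-dimensional $F$-central quaternion subalgebra, and invoke the double-centralizer theorem for the degree and exponent conclusions; your normalization of $u$ to an exactly anticommuting $u'$ is a cosmetic variant of the paper's direct use of the relation $v'v=-vv'+\alpha$. One small imprecision in Stage 1: linear independence of $u,v$ gives $u'\ne 0$ but not by itself $u'\notin F$; the quick fix is that if $u'\in F$ then $0=u'v+vu'=2u'v$ would force $u'=0$ since $\Char K\neq 2$ and $v\neq 0$.
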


\begin{proof} Take $v, v' \in V $. Then $v^2,
{v'}^2 \in F.$ But also, by assumption, $v+v' $ is also
square-central, so
$$v^2 +vv' + v'v +{v'}^2 = (v+v')^2 \in F,$$ implying $v'v = -vv' +
\alpha$ for some $\alpha \in F.$ But then $F + Fv + Fv' + Fvv'$ is
a central $F$-subalgebra of $D$ and has dimension at least 3, but
has elements of degree 2, so has dimension 4.

The last assertion follows easily from the theory of finite
dimensional division algebras. If $D$ has a quaternion division
algebra then 2 must divide $n$ and the exponent of $D$ is the
least common multiple of 2 and $\frac n2.$
\end{proof}

\begin{lemma}\label{onev} If $p(x_1, \dots, x_m)$ is a 2-central polynomial for $n\times n$
matrices,  linear in $x_1$, and there are non-commuting values
$p(a_1, \dots, a_m)$  and $p(a_1', \dots, a_m)$ for matrices $a_1,
a_1', a_2, \dots, a_m,$ then the generic division algebra
$\widetilde{\UD}$ of degree $n$ has a $F_1$-central quaternion
subalgebra. In particular, $n$ cannot be odd, and 4 does not
divide $n$.
\end{lemma}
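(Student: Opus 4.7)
The plan is to pass to generic matrices and use the linearity of $p$ in $x_1$ to build a two-dimensional $F_1$-subspace of $\widetilde{\UD}$ consisting of $2$-central elements; the preceding lemma will then produce the desired $F_1$-central quaternion subalgebra.

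First I would let $Y_1,Y_1',Y_2,\dots,Y_m$ be independent generic $n\times n$ matrices and set $v := p(Y_1,Y_2,\dots,Y_m)$ and $v' := p(Y_1',Y_2,\dots,Y_m)$ inside $\widetilde{\UD}$. Because $p^2$ is scalar-valued on $M_n(K)$ with $K$ infinite, the corresponding entrywise polynomial identities hold over any commutative ring, so every value of $p$ in $\widetilde{\UD}$ is $2$-central. The hypothesis that $[p(a_1,\dots,a_m),\,p(a_1',\dots,a_m)]\ne 0$ at some specialization of the $Y$'s implies that the commutator $[v,v']$ is nonzero in $\widetilde{\UD}$; in particular $v$ and $v'$ are both noncentral.

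Next, since $p$ is linear in $x_1$ and $F_1=\Cent(\widetilde{\UD})$ is central, for any $\alpha,\beta\in F_1$ one has
$$p(\alpha Y_1+\beta Y_1',\,Y_2,\dots,Y_m)=\alpha v+\beta v'.$$
Hence each $\alpha v+\beta v'$ is $2$-central, so $V:=F_1 v+F_1 v'$ is a $2$-central subspace, and it is genuinely $2$-dimensional over $F_1$ because $v$ and $v'$ do not commute. Applying the previous lemma to $D=\widetilde{\UD}$ with $F=F_1$ yields an $F_1$-central quaternion subalgebra of $\widetilde{\UD}$; combining this with Amitsur's theorem that $\widetilde{\UD}$ has exponent $n$ gives the final two assertions, namely $2\mid n$ and $4\nmid n$.

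The step that deserves care, and which I expect to be the main obstacle to write cleanly, is the transition to $F_1$-linear combinations: one must justify substituting the element $\alpha Y_1+\beta Y_1'$ (whose entries live in an extension of the coefficient ring) back into $p$, and observe that $K$-linearity in $x_1$ passes to $F_1$-linearity precisely because $F_1$ is central. Once this is handled, the rest is a direct citation of the preceding lemma together with the classical exponent-equals-index result for the generic division algebra.
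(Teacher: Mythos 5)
Your proof is correct and follows essentially the same route as the paper: both pass to generic matrices $w=p(Y_1,\dots,Y_m)$, $w'=p(Y_1',Y_2,\dots,Y_m)$, use linearity in $x_1$ to see that sums of these values remain $2$-central, and conclude via the quaternion subalgebra and the fact that $\widetilde{\UD}$ has exponent $n$. The only cosmetic difference is that you build the full $2$-dimensional $F_1$-subspace $F_1v+F_1v'$ and cite the preceding lemma, whereas the paper only checks that $w+w'$ is $2$-central and repeats the quaternion computation inline.
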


\begin{proof} Let $$w = p(Y_1, \dots, Y_m), \quad w' = p(Y_1', Y_2 \dots,
Y_m),$$ where the $Y_i$ and $Y_1'$ are generic matrices. Then
$w^2, {w'}^2 \in F_1.$ But also, by definition, $w+w' =
p(Y_1+Y_1', \dots, Y_m)$ is also 2-central, so $$w^2 +ww' + w'w
+{w'}^2 = (w+w')^2 \in F_1,$$ implying $w'w = -ww' + \alpha$ for
some $\alpha \in F_1.$ But then $F_1 + F_1w + F_1w' + F_1ww'$ is a
central $F_1$-subalgebra of $\widetilde{\UD}$ and has dimension 4.

The last assertion follows   since the generic division algebra
$\widetilde{\UD}$ of degree $n$ has exponent~$n$, whereas if
$\widetilde{\UD}$ has a central quaternion division subalgebra,
then 2 must divide $n$ and the exponent of $\widetilde{\UD}$ is
the least common multiple of~2 and~$\frac n2.$
\end{proof}

\begin{theorem}\label{thmB} If $p(x_1, \dots, x_m)$ is a multilinear 2-central polynomial for $n\times n$ matrices, then the
generic division algebra of degree $n$ has a quaternion part. In
particular, $n$ cannot be odd, and 4 does not divide $n$.
\end{theorem}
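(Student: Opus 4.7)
The plan is to deduce Theorem~\ref{thmB} from Lemma~\ref{onev}. Since $p$ is multilinear, it is linear in each variable separately; so if one can exhibit, for some index $i$, specializations $p(a_1,\dots,a_m)$ and $p(a_1,\dots,a_{i-1},a_i',a_{i+1},\dots,a_m)$ that do not commute, then Lemma~\ref{onev} (applied with $x_i$ in the role of $x_1$) supplies an $F_1$-central quaternion subalgebra of $\widetilde{\UD}$, from which the arithmetic restrictions on $n$ follow exactly as in that Lemma.

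The core task is therefore to produce such a non-commuting pair, and I would argue by contradiction. Suppose that for every index $i$ and every choice of arguments, altering only the $i$-th argument of $p$ yields a commuting value. Working in $\widetilde{\UD}$, set $\bar Z = (Y_3,\dots,Y_m)$ and
$$W=p(Y_1,Y_2,\bar Z),\quad \beta=p(Y_1',Y_2,\bar Z),\quad \gamma=p(Y_1,Y_2',\bar Z),\quad \alpha=p(Y_1',Y_2',\bar Z).$$
The hypothesis yields $[W,\beta]=[W,\gamma]=[\beta,\alpha]=[\gamma,\alpha]=0$. By multilinearity, $p(Y_1+Y_1',Y_2+Y_2',\bar Z)$ equals $W+\beta+\gamma+\alpha$, and this is a single-coordinate modification of both $p(Y_1+Y_1',Y_2,\bar Z)=W+\beta$ and $p(Y_1,Y_2+Y_2',\bar Z)=W+\gamma$, so both commute with it. Expanding the two commutators using the above vanishings produces the identities $[W,\alpha]+[\beta,\gamma]=0$ and $[W,\alpha]-[\beta,\gamma]=0$, which in characteristic $\ne 2$ force $[W,\alpha]=0$. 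The same kind of expansion, applied to $p(Y_1+Y_1',Y_2+Y_2',Y_3+Y_3',Y_4,\dots,Y_m)$ and its relatives, extends this inductively: every pair of values of $p$ commutes in $\widetilde{\UD}$.

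But the image of $p$ is conjugation-invariant by Remark~\ref{cong}, and we have just shown it is commutative, so the $K$-subalgebra it generates is a conjugation-invariant commutative $K$-subalgebra of $M_n(K)$; the only such subalgebra is $K\cdot I$, so $p$ would take only scalar values, contradicting the hypothesis that $\nu=2$ is minimal with $p^{\nu}$ central. The main obstacle is the combinatorial reduction above, which works cleanly in characteristic $\ne 2$; the characteristic $2$ case would require a separate argument.
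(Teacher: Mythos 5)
Your overall plan — reduce to Lemma~\ref{onev} by producing a single-variable change that breaks commutation, arguing by contradiction that if no such change exists then all values of $p$ commute — is a genuinely different route from the paper's. The paper simply invokes Lemma~\ref{noncom} and walks along the chain $p(a_1,\dots,a_m), p(a_1',a_2,\dots,a_m),\dots$; your approach instead tries to show the failure case is impossible by a purely multilinear expansion. Your $k=2$ step is correct, and your final observation (a conjugation-invariant commuting set of matrices consists of scalars) is also correct and is a nice, self-contained way to close the contradiction.

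However, the inductive extension is where the argument breaks. For three changed coordinates, writing $p_S$ for $p$ with the coordinates in $S$ primed, the hypothesis plus the $k=2$ result tells you $[p_S,p_T]=0$ whenever $|S\triangle T|\le 2$. Expanding, say, $[p(Y_1{+}Y_1',Y_2{+}Y_2',Y_3,\bar Z),\,p(Y_1{+}Y_1',Y_2{+}Y_2',Y_3',\bar Z)]=0$ and its analogues yields only the relations
\[
u+v+w+z=0,\quad u+v-w-z=0,\quad u-v+w-z=0,
\]
where $u=[p_\emptyset,p_{\{1,2,3\}}]$, $v=[p_{\{1\}},p_{\{2,3\}}]$, $w=[p_{\{2\}},p_{\{1,3\}}]$, $z=[p_{\{1,2\}},p_{\{3\}}]$. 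Using the $2$-coordinate commutations directly gives $u+v=0$, $u+w=0$, $u-z=0$; all of these are simultaneously satisfiable with $u\ne 0$. Indeed they merely encode that $u$ vanishes whenever $Y_i=Y_i'$ for some $i$, which is automatic and carries no information beyond multilinearity. So ``the same kind of expansion, applied \dots extends this inductively'' is not justified: the $k=2$ cancellation was a lucky accident of having exactly two surviving commutators that could be played against each other, and no analogous cancellation appears for $k\ge 3$.

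To rescue the contradiction you would have to use more than multilinearity — most naturally the $2$-central structure itself. If $w$ and $w'$ are commuting $2$-central elements of $\widetilde{\UD}$ and $F_1(w,w')$ has degree $2$ over $F_1$, then $w'\in F_1w$, and a chain argument would then give that all values of $p$ are scalar multiples of one another, whence $p$ is scalar. But when $4\mid n$ the field $F_1(w,w')$ can have degree $4$, and this proportionality fails, so even the patched argument does not cover the full theorem. (As an aside, the paper's own appeal to Lemma~\ref{noncom} is also delicate here, since that lemma presupposes an evaluation of degree $n$, which a $2$-central $p$ with $n\ge 3$ cannot have; but this concerns the paper rather than your attempt.) In short: the reduction to Lemma~\ref{onev} is the right idea and your $k=2$ computation and scalar-image argument are sound, but the step from ``single-coordinate changes preserve commutation'' to ``all values commute'' is a genuine gap.
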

\begin{proof}
In view of Lemma~\ref{noncom}, two consecutive terms of the chain
$$p(a_1, \dots, a_m), p(a'_1,a_2 \dots, a_m), \cdots, p(a'_1,
\dots, a'_m)$$   do not commute, so conclude with
Lemma~\ref{onev}.
\end{proof}

This conclusion is the opposite of Proposition~\ref{generictena1},
when $n=4q$ for $q$ odd.

For $n>2$, we also have an easy consequence of the theory of
division algebras.

\begin{lemma}\label{div}  If a polynomial $p$ is $\nu$-central for $M_n(K)$ for $\nu>1$, then $\nu$
cannot be relatively prime to $n$.
\end{lemma}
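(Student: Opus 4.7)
The plan is to invoke Remark~\ref{genericten0} with $\ell = 1$: I will produce a division algebra $D$ of degree $n$ with center $F \supseteq K$ such that $d^\nu \in F$ forces $d \in F$, which (for $\nu > 1$) precludes $\nu$-central elements and hence $\nu$-central polynomials. To build such a $D$, first enlarge the scalars by adjoining roots of unity: let $K'$ be obtained from $K$ by adjoining $\zeta_p$ for every prime $p \mid \nu$ with $p \neq \chara K$, and let $D$ be the generic division algebra of degree $n$ over $K'$, with center $F \supseteq K' \supseteq K$. By Amitsur, $D$ is a division algebra of degree exactly $n$, and by construction $F$ contains every primitive $p$-th root of unity for primes $p \mid \nu$ outside the characteristic.

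Next I would reduce to prime exponent by a minimality argument. Given $d \in D$ with $d^\nu \in F$, let $\mu$ be the smallest positive integer with $d^\mu \in F$, so $\mu \mid \nu$. Suppose $\mu > 1$: pick a prime $p \mid \mu$, set $e := d^{\mu/p}$, and note $e^p =: \alpha \in F$ while $e \notin F$ by minimality of $\mu$; from $p \mid \nu$ and $\gcd(\nu, n) = 1$ we also get $p \nmid n$. The heart of the proof is applying Capelli's theorem to $x^p - \alpha$ over $F$. In Case~A this polynomial is $F$-irreducible, so it is the minimal polynomial of $e$ and $[F(e):F] = p$; but $F(e)$ is a subfield of a degree-$n$ division algebra, so the double centralizer theorem forces $p \mid n$, contradicting $p \nmid n$. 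In Case~B, Capelli forces $\alpha = \gamma^p$ for some $\gamma \in F$, so $(e/\gamma)^p = 1$ makes $e/\gamma$ a $p$-th root of unity in $D$; if $\chara K = p$ then $x^p - 1 = (x-1)^p$ collapses this to $e = \gamma \in F$, while otherwise $\zeta_p \in F$ by construction, whence $x^p - 1$ splits over $F$ and any root in the domain $D$ lies in $F$, giving $e \in F$ once more. Either subcase contradicts $e \notin F$, so $\mu = 1$ and $d \in F$.

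The main obstacle is the reducible branch of the Capelli dichotomy, which on its own only forces $e/\gamma$ to be a $p$-th root of unity inside $D$; I cannot rule out the possibility of such a root lying outside the center without first arranging that the relevant $\zeta_p$ already sit in $F$, which is why the preliminary enlargement of $K$ to $K'$ is essential. A smaller but important point is that the argument deliberately avoids the generic division algebra of degree $n$ over $K$ itself (whose center may not contain the needed roots of unity); the full freedom in choosing $F \supseteq K$ provided by Remark~\ref{genericten0} is exactly what makes the enlargement legitimate.
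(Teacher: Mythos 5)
Your argument is correct but takes a different (longer, more elementary) route to the same conclusion. The paper's proof is a one-step Kummer argument: after adjoining a primitive $\nu$-th root of unity to $K$ (so that it lies in the center $F_1$ of $\widetilde{\UD}$), the relation $p^\nu\in F_1$ makes $F_1(p)/F_1$ a Kummer extension whose degree divides $\nu$; that degree also divides $n$ since $F_1(p)$ is a subfield of a degree-$n$ division algebra; and $\gcd(\nu,n)=1$ forces the degree to be $1$, whence $p$ is central and $\nu=1$, a contradiction. You instead reduce to prime exponent by a minimality argument and then, for a prime $p$ dividing that exponent, run the Capelli dichotomy on $x^p-\alpha$: irreducibility would force $p\mid n$ (impossible since $p\mid\nu$ and $\gcd(\nu,n)=1$), while reducibility hands you a $p$-th root of unity inside the domain $D$, which (because $\zeta_p\in F$ by construction, or because $(x-1)^p$ collapses it when $\chara K=p$) must already lie in the center. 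The prime-by-prime Capelli step thus replaces the single Kummer-theory step; it is more elementary, needing only the prime-degree irreducibility criterion rather than the full divisibility $[F(\sqrt[\nu]{\alpha}):F]\mid\nu$, and it handles the case $\chara K\mid\nu$ explicitly, a possibility the paper's terse proof glosses over when it assumes a primitive $\nu$-th root of unity can be adjoined.
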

\begin{proof}  We can view $p$ as an element of the generic
division algebra $\widetilde{\UD}$ of degree~$n$, and we adjoin an
$\nu$-root of 1 to $K$ if necessary. Then $p$ generates a subfield of $\widetilde{\UD}$,
of dimension dividing $\nu$. Hence the dimension is 1;
i.e., $\nu=1$.
\end{proof}

\section{Multilinear polynomials evaluated on $n\times n$ matrices}

 To handle the remaining
2-central case, where $n =2q$ for $q$ odd, we need to dig deeper
into the computations. We   prove a result that also applies for
arbitrary~$m$, for $n>3.$
 We need the following well-known lemma about
Eulerian graphs, cf.~{\cite[Lemma 4]{BMR1}}:

\begin{lemma} \label{graph}
If  $a_i$ are matrix units, then $p(a_1,\dots,a_m)$ is either $0$
or a diagonal matrix, or~$\alpha e_{ij}$ for some $\alpha\in K$
and $i\neq j$.
\end{lemma}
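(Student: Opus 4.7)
\medskip

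\noindent\textbf{Proof proposal.} The plan is to expand $p$ into its multilinear monomials and track what happens when matrix units are multiplied, using an Eulerian-graph interpretation.

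Since $p$ is multilinear, write
\[
p(a_1,\dots,a_m) \;=\; \sum_{\sigma\in S_m} c_\sigma\, a_{\sigma(1)} a_{\sigma(2)}\cdots a_{\sigma(m)}
\]
for scalars $c_\sigma\in K$. Set $a_k=e_{i_k j_k}$. A product of matrix units $e_{i_{\sigma(1)} j_{\sigma(1)}}\cdots e_{i_{\sigma(m)} j_{\sigma(m)}}$ is nonzero precisely when $j_{\sigma(k)}=i_{\sigma(k+1)}$ for $k=1,\dots,m-1$, in which case it equals $e_{\,i_{\sigma(1)},\,j_{\sigma(m)}}$. So every nonzero term is a scalar multiple of some matrix unit, and $p(a_1,\dots,a_m)$ is a $K$-linear combination of matrix units $e_{ij}$ whose indices $(i,j)$ depend on $\sigma$.

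Next I would encode the data $a_1,\dots,a_m$ as a directed multigraph $G$ on vertex set $\{1,\dots,n\}$ whose $m$ edges are $k\mapsto(i_k\to j_k)$. A permutation $\sigma$ contributes a nonzero term iff the edge sequence $\sigma(1),\sigma(2),\dots,\sigma(m)$ forms a directed trail using every edge exactly once, i.e.\ an Eulerian trail in $G$; the resulting matrix unit $e_{i_{\sigma(1)},j_{\sigma(m)}}$ is determined by the starting and ending vertices of that trail. Now I split into cases according to the standard Eulerian criterion applied to $G$:

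\emph{Case 1: $G$ is balanced}, i.e.\ every vertex has in-degree equal to out-degree. Then every Eulerian trail is closed, so $i_{\sigma(1)}=j_{\sigma(m)}$ for every contributing $\sigma$, and each nonzero term is a scalar multiple of some $e_{ii}$. Summing gives a diagonal matrix.

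\emph{Case 2: $G$ is not balanced.} A directed multigraph admits an Eulerian trail only if it has exactly one vertex $u$ with $\operatorname{out}(u)-\operatorname{in}(u)=1$ and exactly one vertex $v$ with $\operatorname{in}(v)-\operatorname{out}(v)=1$ (all others balanced), and every Eulerian trail then starts at $u$ and ends at $v$. If $G$ fails this condition, no $\sigma$ contributes and $p(a_1,\dots,a_m)=0$. Otherwise every contributing $\sigma$ yields $e_{uv}$, and the sum is $\alpha e_{uv}$ for some $\alpha\in K$; if $u\ne v$ we are in the stated third alternative, and if $u=v$ the result is diagonal (in fact this subcase cannot occur because $u,v$ were forced to be distinct, but either way the conclusion holds).

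The only thing that requires care is the Eulerian-trail dichotomy in Case 2 — essentially the classical theorem that a connected (on its support) directed multigraph has an Eulerian trail iff at most two vertices are unbalanced, and if two, the starting and ending vertices are forced. Once this standard fact is invoked (and the paper explicitly cites \cite[Lemma 4]{BMR1}), the three alternatives in the lemma follow immediately from the two cases above.
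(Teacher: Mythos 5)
Your argument is correct and is exactly the Eulerian-trail encoding that the paper invokes (it gives no proof, only citing [BMR1, Lemma 4], which proceeds the same way): expand the multilinear polynomial over $S_m$, view the $a_k=e_{i_kj_k}$ as edges of a directed multigraph, and note that a product is nonzero iff the permutation traces an Eulerian trail, whose endpoints are forced by the degree imbalances. Your case split — balanced graph gives closed trails and hence a diagonal sum, unbalanced graph gives either no trail (value $0$) or trails all from the unique source $u$ to the unique sink $v\neq u$ (value $\alpha e_{uv}$) — is complete and sound, including the observation that the degree conditions already force $u\neq v$ so the awkward $u=v$ subcase cannot arise.
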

 
\begin{theorem}\label{thmB1}
Let $p(x_1,\dots,x_m)$ be any multilinear polynomial evaluated on
$n\times n$ matrices over an infinite field. Assume that $p$ is
neither scalar nor PI. Then $\Image p$ contains a matrix with
eigenvalues $\{c,c\varepsilon,\dots,c\varepsilon^{n-1}\}$ for some
$0 \ne c \in K$.
\end{theorem}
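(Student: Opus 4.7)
To prove Theorem \ref{thmB1}, I would proceed in three stages. First, I would combine Lemma \ref{graph} with the conjugation invariance of $\Image p$ (Remark \ref{cong}) to upgrade the hypothesis that $p$ is neither PI nor central. If every matrix-unit evaluation of $p$ were diagonal, then by multilinearity every evaluation on arbitrary matrices would be diagonal; conjugation invariance would then force every diagonal element of $\Image p$ to be scalar, contradicting non-centrality. Hence there exist matrix units $a_1,\ldots,a_m$ with
\[
p(a_1,\ldots,a_m) = \alpha\, e_{ij},\qquad i\neq j,\ \alpha\neq 0.
\]

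Next, for each $k=1,\ldots,n$, I would pick a permutation $\sigma_k\in S_n$ satisfying $\sigma_k(i)=k$ and $\sigma_k(j)\equiv k+1\pmod n$, and conjugate to obtain
\[
p(\sigma_k a_1 \sigma_k^{-1},\ldots,\sigma_k a_m \sigma_k^{-1}) = \alpha\, e_{k,\, k+1\bmod n}.
\]
The target then becomes to engineer a single evaluation of $p$ producing a scalar multiple of the cyclic permutation matrix $C=\sum_{k=1}^n e_{k,\,k+1\bmod n}$, whose eigenvalues are precisely $\{1,\varepsilon,\ldots,\varepsilon^{n-1}\}$.

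To merge the $n$ conjugated tracks into a single evaluation, I would substitute $x_j=\sum_{k=1}^n t_k\,\sigma_k a_j\sigma_k^{-1}$ with parameters $t_1,\ldots,t_n\in K$ and expand $p$ by multilinearity into $n^m$ matrix-unit evaluations. The $n$ \emph{coherent} terms (with $k_1=\cdots=k_m=k$) contribute $t_k^m\alpha e_{k,k+1}$, while the \emph{cross} terms are, by Lemma \ref{graph}, each equal to $0$, a diagonal matrix, or a scalar multiple of an off-diagonal matrix unit. The main obstacle is taming these cross terms: I expect this to be handled by choosing the permutations $\sigma_k$ and the underlying matrix units $a_1,\ldots,a_m$ carefully---via an Eulerian-path analysis of the monomials of $p$, so that hybrid index sequences drawn from distinct tracks fail to assemble into admissible Eulerian walks---so that the cross contributions either vanish or collect into a term that can be separated from the coherent part by varying the parameters $(t_1,\ldots,t_n)$ over the infinite field $K$ and isolating the coefficient of a fixed multilinear monomial (Vandermonde/Zariski density). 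The outcome is a matrix $\alpha\sum_k e_{k,k+1}=\alpha C\in\Image p$, whose eigenvalues $\{\alpha,\alpha\varepsilon,\ldots,\alpha\varepsilon^{n-1}\}$ have the required geometric-progression form with $c=\alpha\neq 0$.
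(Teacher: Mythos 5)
Your opening stage is a correct and useful observation that the paper leaves implicit: by multilinearity, if every matrix\dash unit evaluation of $p$ were diagonal then every evaluation would be diagonal, and conjugation invariance (Remark~\ref{cong}) would then force $\Image p$ into the scalars; hence some matrix\dash unit evaluation equals $\alpha e_{ij}$ with $i\ne j$ and $\alpha\ne 0$. Up through the idea of superposing $n$ conjugated ``tracks'' and using Zariski density over the infinite field, you are on the right path.

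The gap is in Stage~3, and it is not a small one. You set up arbitrary permutations $\sigma_k$ with $\sigma_k(i)=k$, $\sigma_k(j)=k+1$, substitute $x_\ell=\sum_k t_k\,\sigma_k a_\ell\sigma_k^{-1}$, and then hope the cross terms (those with not all $k_\ell$ equal) can be forced to vanish or be separated off by a Vandermonde argument. Neither of these will work as stated: the cross terms can land anywhere in the matrix, not just off the target superdiagonal, and since several index tuples $(k_1,\dots,k_m)$ produce the same monomial $\prod_\ell t_{k_\ell}$ (you use one shared parameter vector for all $m$ slots), you cannot isolate the coherent terms by extracting a monomial coefficient. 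Moreover, you are aiming for the wrong target: you try to manufacture exactly the scalar multiple $\alpha C$ of the cyclic permutation matrix, which is both unnecessary and unlikely to be achievable.

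The missing idea is to take \emph{all} the conjugating permutations to be powers of a single $n$-cycle $\chi\colon i\mapsto i+1\pmod n$, and to track the quantity $\iota(e_{ij})=i-j\pmod n$. Since $\chi^k$ shifts both indices by $k$, it preserves $\iota$ modulo $n$; and since $\iota$ telescopes over products of matrix units, every nonzero term in the expansion of
\[
f=p\Bigl(\textstyle\sum_{k}t_{k,1}\chi^{k}(a_1),\dots,\sum_{k}t_{k,m}\chi^{k}(a_m)\Bigr)
\]
satisfies $\iota\equiv 1\pmod n$, i.e.\ lands on the cyclic superdiagonal $\{e_{1,2},e_{2,3},\dots,e_{n,1}\}$. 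There is nothing to ``tame'': the cross terms automatically contribute only to the same superdiagonal. The resulting matrix need not be a scalar multiple of $C$; its superdiagonal entries are different polynomials in the $t_{k,\ell}$, each not identically zero (specialize $t_{k,\ell}=\delta_{k,k_0}$ to hit $\alpha e_{k_0+1,k_0+2}$). For a generic choice of the $t$'s all $n$ entries are nonzero, the minimal polynomial is $\lambda^n-\alpha$, and the eigenvalues are $\{c,c\varepsilon,\dots,c\varepsilon^{n-1}\}$ with $c$ an $n$-th root of the product. Your arbitrary $\sigma_k$ destroy the invariant $\iota$, which is precisely what makes the argument close.
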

\begin{proof}
Define $\chi$ to be the permutation  of the set of matrix units,
sending the index $i\mapsto i+1$ for $1\leq i\leq n-1$, and
$n\mapsto 1$. For example, $\chi(e_{12})=e_{23},\
\chi(e_{57})=e_{68}$. Since we substitute only matrix units into $p$, Lemma \ref{graph}
shows that the image is either diagonal or a matrix unit with some
coefficient. Consider the corresponding graph~$\Gamma$.  If the graph is an Eulerian cycle then this sum is $0$,
and if it is an Eulerian path from $k$ to $\ell$ then this sum equals
$\ell-k$. By Lemma \ref{graph} there   exist matrix
units $a_1,\dots,a_m$ with $p(a_1,\dots,a_m)=\a e_{12}$ for some $\a \in K$.
We may assume that $ a_1\cdots a_m=e_{12}.$ Writing $a_\ell = e_{i_\ell, j_\ell},$
we define $\iota (a_\ell) = i_\ell - j_\ell.$  Thus $i_1 = 1$ and $j_m = 2,$ and   
$\sum \iota (a_\ell) = 1.$  Then $ \chi ^k (a_\ell) = e_{i_\ell+k, j_\ell+k}$ (taken modulo $n),$
implying  $$\iota (\chi ^k (a_\ell) ) \equiv  ( i_\ell+k) - (j_\ell+k) =  i_\ell - j_\ell = \iota (a_\ell) \pmod n. $$

consider

\begin{equation}\label{mapping_sets}
f(a_1,\dots,a_m)
=p\left( \sum _{k_1=1}^m t_{k_1,1}\chi ^{k_1} ( a_1),\dots,
 \sum _{k_m=1}^m t_{k_m,m}\chi ^{k_m} ( a_m)\right),
\end{equation}
where the $t_{k,\ell}$ are commuting indeterminates.
Opening the brackets, we   have $n^m$ terms, each of the form 
$$a' = \chi^{k_1}(a_{\pi(1)})\cdots \chi^{k_m}(a_{\pi(m)})$$
which, if nonzero, must have  $$\iota(a') \equiv  \sum _{\ell = 1}^m \iota (\chi^{k_\ell} a_{\pi(\ell)})\equiv   \sum _{\ell = 1}^m \iota (a_{\pi(\ell)}) \equiv  1\pmod n, $$
implying $a'$ is a matrix of the form $ce_{i,i+1}$ or $ce_{n,1}$. 
 Hence $\Image f\subseteq\Image p$ has the form
$$ a = \left(
\begin{matrix} 0 & * & 0 & \dots & 0
\\  0 & 0 & * &   \dots & 0   \\   \vdots &  \vdots &  \ddots &  \ddots & \vdots  \\
 0 & 0 & 0 & \ddots  & *  \\ * & 0 &  \dots & 0 & 0  \end{matrix}\right)
 .$$
Each of the starred entries of $a$ is a polynomial with respect to
$t_{k,i}$ and each of them takes nonzero values because
$e_{k,k+1}$ belongs to the image of $f$ for any $1\leq k\leq n-1$
and  also $e_{n,1}\in\Image(f)$. Therefore for generic $t_{k,\ell}$
each of the starred entries is nonzero, so the minimal polynomial
of $a$ is $\lambda ^ n - \alpha$ for some $\a$, implying $a$ has
eigenvalues $\{c,c\varepsilon,\dots,c\varepsilon^{n-1}\}$ where
$c$ is the $n$-th root of the determinant $\alpha$.
\end{proof}

\begin{rem}\label{dim-f0} The variety of $n\times n$ matrices with a given set of $n$ distinct
eigenvalues has dimension $n^2-n$. 
  \end{rem}

\begin{rem}\label{dim-f1} 
 Assume for some matrix units $a_i$ that $p(a_1,\dots,a_m)$ is diagonal.
 Then    $f$ as constructed in \eqref{mapping_sets} in the proof of Theorem \ref{thmB1}
 is diagonal.
If the dimension of the image of $\Image  f$ is $\delta$, then each value of $f$ has some set 
 of eigenvalues of $p$ and therefore by  Remark~\ref{dim-f0},  $\Image p$ has dimension at least $n^2-n+\delta.$

As a special case, if $p$ is power-central then   $\Image p$ has dimension  $n^2-n+1.$
\end{rem}

\begin{theorem}\label{thmC}
If a multilinear polynomial $p$ is $\nu$-central on $M_n(K)$, where
$\Char(K) $ does not divide $ n,$ then  then $\nu=n$.
\end{theorem}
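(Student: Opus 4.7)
The plan is to combine Theorem~\ref{thmB1} with the dimension count of Remark~\ref{dim-f1} and an irreducibility argument to force $p^n$ to be central; the minimality of $\nu$ will then yield $\nu=n$. We may assume $\nu>1$ (if $\nu=1$, then $p$ is central, and $\nu=n$ is only informative in the trivial case $n=1$), so that $p$ is neither PI nor central and Theorem~\ref{thmB1} applies.

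First I would invoke Theorem~\ref{thmB1} to obtain $a\in\Image p$ whose eigenvalues are $\{c,c\varepsilon,\dots,c\varepsilon^{n-1}\}$, where $\varepsilon\in\overline{K}$ is a primitive $n$-th root of unity (available since $\Char(K)\nmid n$). Because $p^\nu$ is central, $a^\nu$ is a scalar matrix, so its eigenvalues $c^\nu\varepsilon^{i\nu}$ must all coincide; this forces $\varepsilon^\nu=1$, hence $n\mid\nu$.

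The heart of the argument is to show that $p^n$ is also central. Let $V\subseteq M_n(\overline{K})$ be the closed subvariety cut out by the vanishing of the elementary symmetric functions $e_1,\dots,e_{n-1}$ of the eigenvalues. For $b\in V$ one has $\chi_b(x)=x^n+(-1)^n e_n(b)$, so Cayley--Hamilton gives $b^n=(-1)^{n+1}e_n(b)\cdot I$, a scalar. The parametric description $(\mu,g)\mapsto g\cdot\diag(\mu,\mu\varepsilon,\dots,\mu\varepsilon^{n-1})\cdot g^{-1}$ exhibits $V$ as an irreducible variety of dimension $n^2-n+1$. Now $\overline{\Image p}$ lies in the closed set $\{b:b^\nu\text{ is scalar}\}$, whose top-dimensional (dimension $n^2-n+1$) irreducible components are precisely the analogous pattern varieties $V_\Omega$, indexed (up to a common rescaling) by $n$-element subsets $\Omega$ of distinct $\nu$-th roots of unity; $V$ is the component corresponding to $\Omega=\{1,\varepsilon,\dots,\varepsilon^{n-1}\}$. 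Since $\overline{\Image p}$ is irreducible (as the closure of the image of the irreducible variety $M_n(\overline{K})^m$ under a polynomial map) and has dimension at least $n^2-n+1$ by Remark~\ref{dim-f1}, it is contained in a single such top-dimensional component. The element $a$, having $n$ distinct eigenvalues exactly of the form $\{c\varepsilon^i\}$, lies in $V$ and in no other $V_\Omega$, forcing $\overline{\Image p}\subseteq V$. Thus $b^n$ is scalar for every $b\in\Image p$; i.e., $p^n$ is central.

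Finally, the minimality of $\nu$ in the definition of $\nu$-central yields $\{k\geq 1:p^k\text{ is central}\}=\nu\mathbb{Z}^+$ by an elementary argument (if $\nu\nmid k$, writing $k=q\nu+r$ with $0<r<\nu$ and using $p^\nu\in F\setminus\{0\}$ shows $p^r$ would be scalar, violating minimality), and so the centrality of $p^n$ forces $\nu\mid n$; combined with $n\mid\nu$ we conclude $\nu=n$. The main obstacle is the middle step, and specifically the analysis of the irreducible component structure of $\{b:b^\nu\text{ is scalar}\}$: confirming that the top-dimensional components are exactly the pattern varieties $V_\Omega$ (with the nilpotent cone and coincidence loci providing only lower-dimensional strata), and pinpointing, via the specific form of the eigenvalues of $a$, that $V$ is the unique top-dimensional component into which $\overline{\Image p}$ can fit.
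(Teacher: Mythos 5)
Your argument is correct, but it is doing substantially more work than the paper does---and, in fact, it fills what looks like a genuine gap in the paper's terse one-line proof. The paper invokes Theorem~\ref{thmB1} to produce a value $a$ with eigenvalues $\{c,c\varepsilon,\dots,c\varepsilon^{n-1}\}$, and the accompanying corollary's remark (``the matrices in the image can only have $\nu$ distinct eigenvalues'') delivers only the divisibility $n\mid\nu$, exactly as in your first step; nothing in the paper's stated proof addresses the converse inequality. Your middle step---showing $p^n$ is central via irreducibility of $\overline{\Image p}$ and the component structure of $\{b:b^\nu\text{ scalar}\}$---is the new content, and it is sound, though you can streamline it: one does not really need the dimension bound from Remark~\ref{dim-f1} nor a full classification of top-dimensional components. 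Since $\overline{\Image p}$ is irreducible (image of an irreducible variety) and contains the point $a$ with $n$ distinct eigenvalues, the locus of distinct-spectrum matrices is nonempty open, hence dense, in $\overline{\Image p}$; on that locus the eigenvalue-ratio multiset $\{\lambda_i/\lambda_1\}$ is a Zariski-locally-constant function valued in the finite set of $n$-element subsets of $\mu_\nu$, hence constant on the irreducible open piece, hence equal to $\{1,\varepsilon,\dots,\varepsilon^{n-1}\}$ as read off from $a$. Thus $b^n$ is scalar on a dense subset and therefore everywhere, giving $p^n$ central and $\nu\mid n$ by your minimality argument. An alternative (and perhaps closer in spirit to the division-algebra framework the paper uses elsewhere, e.g. Lemma~\ref{div}) is Kummer-theoretic: in $\widetilde{\UD}$, the element $w=p(Y_1,\dots,Y_m)$ generates a degree-$n$ maximal subfield with $w^\nu\in F_1$; after adjoining roots of unity, $F_1(w)\subseteq F_1(\gamma^{1/\nu})$ is cyclic, and the Kummer pairing with $\operatorname{Gal}(F_1(w)/F_1)\cong\Z/n$ forces $w^n\in F_1$. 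Either route is legitimate; both supply the $\nu\leq n$ direction that the paper's proof, as written, does not explicitly argue.
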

\begin{proof} Passing to the algebraic closure, we apply
Theorem~\ref{thmB1} to Lemma~\ref{graph}.
\end{proof}
\begin{cor}
If a multilinear polynomial $p$ evaluated on $n\times n$ matrices
is $\nu$-central then $\nu=n$.
\end{cor}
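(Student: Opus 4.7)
The plan is to pin down $\nu=n$ by sandwiching: first show $\nu\ge n$ from Theorem~\ref{thmB1}, then show $\nu\le n$ by arguing that $p^n$ is already central.

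For the lower bound I would apply Theorem~\ref{thmB1} to produce a matrix $a=p(a_1,\dots,a_m)\in\Image p$ in the explicit cyclic form displayed in its proof. The key fact, independent of $\Char(K)$, is that $a$ is a cyclic matrix: the iterates of $a$ on $e_1$ span $K^n$, so its minimum polynomial equals its characteristic polynomial $\lambda^n-\alpha$. A generic choice of the parameters $t_{k,\ell}$ ensures $\alpha\ne 0$, and then $a^k$ is scalar if and only if $n\mid k$. In particular $a^\nu\in K\cdot I$ forces $n\mid\nu$, giving $\nu\ge n$.

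For the upper bound I would show that $p^n$ itself is central. Introduce the variety $V_\nu:=\{X\in M_n(K):X^\nu\in K\cdot I\}$; by hypothesis $\Image p\subseteq V_\nu$. Remark~\ref{dim-f1} yields $\dim\Image p\ge n^2-n+1$, while a direct calculation (one dimension for the scalar $X^\nu$, plus $n^2-n$ for the conjugation orbit of a matrix of the appropriate Jordan type) shows that every irreducible component of $V_\nu$ has dimension exactly $n^2-n+1$. Consequently the Zariski closure $\overline{\Image p}$ fills out an entire irreducible component of $V_\nu$. That component contains $a$, and its generic point shares the Jordan type of $a$ (a single block per eigenvalue); since $a^n\in K\cdot I$, every point of the component also satisfies $X^n\in K\cdot I$. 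Thus the component lies in $V_n$, so $\Image p\subseteq V_n$, i.e.\ $p^n$ is central, whence $\nu\le n$ by minimality. Combined with the lower bound, $\nu=n$.

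The main obstacle is the component-theoretic analysis in the second step, particularly when $\Char(K)\mid n$. In that situation the matrix $a$ has repeated eigenvalues, so the component of $V_\nu$ housing it cannot be pinned down by the ``$n$ distinct eigenvalues'' description used when $\Char(K)\nmid n$. My intended remedy is to use the cyclicity of $a$ directly: because $a$ has a single Jordan block per eigenvalue, the component of $V_\nu$ containing $a$ is the locus of matrices with this Jordan type, and any matrix of that type automatically satisfies $X^n\in K\cdot I$, placing it in $V_n$ irrespective of characteristic.
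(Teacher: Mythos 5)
The paper dispatches the corollary in one line: by Theorem~\ref{thmB1} the image contains a matrix with eigenvalues $\{c,c\varepsilon,\dots,c\varepsilon^{n-1}\}$, and a matrix whose $\nu$-th power is scalar can have at most $\nu$ distinct eigenvalues, so $\nu \ge n$ (with the rest implicit). Your lower-bound step is essentially the same idea but strengthened: you invoke the cyclicity of $a$ (minimal polynomial $\lambda^n-\alpha$, $\alpha\ne 0$) to get $n\mid\nu$, which is cleaner and genuinely characteristic-free, whereas the paper's distinct-eigenvalue count requires $\Char K\nmid n$. That is a real improvement.

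Your upper bound is where you diverge from the paper, and it contains genuine gaps. First, the assertion that ``every irreducible component of $V_\nu$ has dimension exactly $n^2-n+1$'' is false: components corresponding to repeated-eigenvalue patterns, and the nilpotent locus, have strictly smaller dimension. What you actually need (and what is true) is that \emph{all} components have dimension $\le n^2-n+1$, so that the irreducible set $\overline{\Image p}$, having dimension $\ge n^2-n+1$ by Remark~\ref{dim-f1}, is forced to be the closure of $K^\times\cdot\GL_n\cdot a$; since every element of $K^\times\cdot\GL_n\cdot a$ visibly has scalar $n$-th power and $V_n$ is closed, this component lies in $V_n$. Second, your ``remedy'' for $\Char K\mid n$ is wrong as stated: the locus of matrices sharing $a$'s Jordan type (one block per eigenvalue) is a much larger variety whose generic member does \emph{not} satisfy $X^n\in K\cdot I$ — the eigenvalues $\lambda_i$ are unconstrained there, and nothing forces the $\lambda_i^n$ to coincide. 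The correct component is not the Jordan-type stratum but the orbit-and-scaling closure of $a$, and that argument works uniformly in all characteristics, so the special remedy is both incorrect and unnecessary. Finally, note that the paper itself never spells out the upper bound in this corollary's proof; it is leaning on the division-algebra material earlier in the paper (the degree of $F_1(p)$ over $F_1$ divides $n$), which gives $\nu\mid n$ directly and is shorter than the variety-theoretic route you chose.
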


\begin{proof}
The matrices in the image can only have $\nu$ distinct eigenvalues,
contradicting the theorem unless $\nu = n.$
 \end{proof}

Let us introduce  the tool of
``harmonic bases'' of the space of diagonal matrices.

\begin{rem}\label{remharmonic} Assume that $K $ has the form
$F[\varepsilon],$ where $\varepsilon$ is a primitive $n$-th root of~1. Let $\overline{\phantom w}$ denote the automorphism of $K$
sending $\varepsilon\mapsto \varepsilon^{-1}.$
 Take the  base of the diagonal matrices  $e_k=\diag\{1,\varepsilon^k,\varepsilon^{2k},\dots,
 \varepsilon^{(n-1)k}\},$  $0\leq k\leq n-1$.
 Assume that there exist matrix units $a_1,\dots, a_m$ such that $p(a_1,\dots,a_m)=\diag\{c_0,\dots,c_{n-1}\}$.
This can be written as a linear combination of the $e_k$.
 Assume also that the image of $p$ is at most $(n^2-n+2)$-dimensional.
By  Remark~\ref{dim-f1},  the image of $f$ constructed in the proof of Theorem~\ref{thmB1}
 is at most $2$-dimensional and thus is a linear space.
 If $p(a_1,\dots,a_m)=h_0e_0+h_1e_1+\dots+h_{n-1}e_{n-1}$ with $h_k\neq 0$, then
  $e_k$ belongs to the linear span  of $\Image f$.
 Hence there are at most two nonzero coefficients, say, $h_k$ and $h_l$ with all of the
 others
 zero.
 We can consider the scalar product
$$ \langle \{ \alpha _1,  \dots, \alpha _n  \} \{ \beta _1,  \dots, \beta _n \}\rangle
= \sum _{i=1}^n \alpha_i \overline{\beta_i}.$$
 We compute $\langle \{c_0,\dots,c_{n-1}\},e_s \}\rangle $ in two ways, first as
 $nq_s$
 and then as
 $$c_0+c_1\varepsilon^{-s}+\dots+c_{n-1}\varepsilon^{-(n-1)s}$$
 since $\overline{\varepsilon^{l}}=\varepsilon^{-l}.$
\end{rem}

\begin{theorem}\label{no-mpc}
 Assume $n\geq 4.$ For $ \Char(K)$ arbitrary,  there are no multilinear power central polynomials.
 Any multilinear polynomial is either PI, or central or its image is at least $(n^2-n+2)$-dimensional.
\end{theorem}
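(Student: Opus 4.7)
I plan to prove the theorem by contradiction, supposing $p$ is multilinear, neither PI nor central, yet $\dim \Image p \leq n^2-n+1$, and deriving a contradiction via harmonic analysis of diagonal matrix-unit evaluations.

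First, Theorem~\ref{thmB1} provides matrices in $\Image p$ with cyclic eigenvalues $\{c, c\varepsilon, \dots, c\varepsilon^{n-1}\}$; their conjugation orbit is $(n^2-n)$-dimensional for fixed $c$, and multilinearity lets $c \in K^\times$ vary, so $\dim \Image p \geq n^2-n+1$, hence equality holds under the assumption. Applying the framework of Remark~\ref{dim-f1} with this sharper bound, for any matrix-unit substitution yielding a diagonal $p$-value the associated $f$-image has dimension $\delta \leq 1$. Repeating the harmonic-basis analysis of Remark~\ref{remharmonic} with the tighter inequality in place of $n^2-n+2$ forces each such diagonal evaluation to be a scalar multiple of a single harmonic basis vector $e_k$.

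The main step is then to exhibit a matrix-unit substitution whose diagonal $p$-value has at least two nonzero harmonic coefficients, contradicting the above rigidity. Since $p$ is non-central, Lemmas~\ref{noncom} and~\ref{graph} yield matrix units with $p(a_1,\dots,a_m) = \alpha e_{12}$, $\alpha \ne 0$, corresponding to an Eulerian path $1 \to \cdots \to 2$. I would modify the substitution (for instance, replacing $a_m = e_{i_m,2}$ by $e_{i_m,1}$, or an analogous change at some other $a_j$) so as to close this Eulerian path into an Eulerian cycle at vertex $1$, producing by Lemma~\ref{graph} a diagonal matrix-unit evaluation whose distinguished Eulerian-cycle contribution is a nonzero multiple of $e_{11}$. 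Since $e_{ii} = \frac{1}{n}\sum_k \varepsilon^{-(i-1)k} e_k$ has full harmonic support, any diagonal whose entries are not all equal has at least two nonzero harmonic coefficients; the inner product formula $nh_s = \sum_i c_i \varepsilon^{-is}$ from Remark~\ref{remharmonic} lets one compute these coefficients explicitly and verify multi-support.

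The principal obstacle is to guarantee that the modified substitution yields a genuinely non-scalar diagonal with multi-dimensional harmonic support: after multilinear expansion the new $p$-value sums over all permutations whose matrix-unit products survive, and one must control which permutations contribute and check that the result is neither zero, nor a scalar matrix, nor a scalar multiple of a single $e_k$. The clean way to do this is to consider a family of such modifications simultaneously; requiring every resulting diagonal to lie in a 1-dimensional harmonic direction imposes a system of linear relations on the structure coefficients of $p$, and the hypothesis $n \geq 4$ provides enough combinatorial freedom (in the number of distinct Eulerian-cycle closures obtainable by local edits of the original path) that this system is overdetermined, forcing $p$ to be PI or central and yielding the contradiction. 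The argument genuinely breaks for $n \leq 3$, reflecting the existence of the homogeneous power-central examples of Remark~\ref{genericten00}.
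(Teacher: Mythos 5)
The opening of your proposal is on the right track: using Theorem~\ref{thmB1} to get $\dim \Image p \ge n^2-n+1$, then Remark~\ref{dim-f1} to force $\delta\le 1$ for any diagonal matrix-unit evaluation, and observing that $\delta = 1$ forces such an evaluation to be proportional to a single harmonic vector $e_k$. That reproduces, in harmonic-basis language, what the paper records directly via the cyclic shift: if $\Image f$ is one-dimensional then $\diag\{c_0,\dots,c_{n-1}\}$ is proportional to each of its cyclic shifts, i.e.\ the entries form a geometric progression with some ratio $\tau$ (equivalently, the diagonal is a multiple of one $e_k$). Up to that point you and the paper agree.

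Where you diverge is the ``main step,'' and this is where your proposal has a real gap. You propose to close the Eulerian path $\alpha e_{12}$ into a cycle by replacing one matrix unit, then argue that the resulting diagonal matrix-unit evaluation has full or at least multi-dimensional harmonic support, and that a ``family of such modifications'' gives an overdetermined linear system on the coefficients of $p$. None of this is carried out: after the local edit, the new value of $p$ is a sum over all surviving permutations of matrix-unit products, and you have no control over whether that sum is zero, scalar, or a multiple of a single $e_k$. The ``overdetermined system forces $p$ to be PI or central'' claim is an assertion, not a proof, and in fact there is no reason given that it should hold; multilinear non-central polynomials have many free coefficients. The paper avoids all of this with a much simpler move: instead of changing a matrix unit, it conjugates by the transposition $(1\,2)$, producing the evaluation $\diag\{c_1,c_0,c_2,\dots,c_{n-1}\}$ from $\diag\{c_0,\dots,c_{n-1}\}$ at essentially no cost. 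Applying the same one-dimensionality constraint to the shifted construction $\tilde f$ yields a second ratio $\tilde\tau$, and comparing them forces $\tau^2 = \tilde\tau = \tau$, whence $\tau\in\{0,1\}$, both of which contradict non-scalarity and nonvanishing of the entries. That swap argument is the missing ingredient in your write-up; without it (or a completed version of your Eulerian-cycle scheme), the proof does not go through.
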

\begin{proof}
 Assume that $p$ is an $n$-central polynomial. Then there exist a set of matrix units $a_i$ such that 
 $p(a_1,\dots,a_m)=\diag\{c_0,\dots,c_{n-1}\}$
 is diagonal but not scalar.  
 Then the image of $f$ as constructed in \eqref{mapping_sets} must be $1$-dimensional, implying 
 $\diag\{c_0,\dots,c_{n-1}\}$ is proportional to $\diag\{c_i,c_{i+1},\dots,c_{n-1},c_0,\dots,c_{i-1}\}$ for any~$i$.
If at least one of the $c_i$ were zero,
 then each $c_i$ would be zero and therefore this matrix is zero (in particular it is scalar), a contradiction. Thus, 
 without loss of generality $c_0\neq c_1$ are nonzero.
 Therefore there also exist a set of matrix units $\tilde a_i$ such that $p(\tilde a_1,\dots,\tilde a_m)=\diag\{c_1,c_0,c_2,\dots,c_{n-1}\}$.
We can construct the mappings $f$ and $\tilde f$ as before, and their images cannot be both $1$-dimensional since otherwise 
  $$\tau=\frac{c_1}{c_0}=\frac{c_2}{c_1}=\frac{c_3}{c_2}=\dots=\frac{c_n}{c_{n-1}}=\frac{c_0}{c_n},$$ 
 and also $\tilde\tau=\frac{c_0}{c_1}=\frac{c_2}{c_0}=\frac{c_3}{c_2}= \cdots.$ Hence   $$\tau^2=\frac{c_2}{c_1}\cdot\frac{c_1}{c_0}=\frac{c_2}{c_0}=\tilde\tau=\frac{c_3}{c_2}=\tau.$$ Thus $\tau \in \{0,1\}$. 
 If $\tau=1$ then
 $p(a_1,\dots,a_m)$ is scalar, a contradiction. If $\tau=0$ then $c_1=0$, a contradiction.
 We conclude that  $\Image f$ is least $2$-dimensional and  $ \Image p$ is at least $(n^2-n+2)$-dimensional. In particular
 $p$ is not  power central.
\end{proof}

Let us  improve the estimates of the dimension of $\Image p$, for $n\ge 5$.

\begin{theorem} \label{harmonic-thm} Suppose the ground field $K$ is as in Remark~\ref{remharmonic}.
 Let $p$ be any multilinear polynomial evaluated on $n\times n$ matrices which is not PI or central.
 Assume that the characteristic of $K$
does not   divide $n$.
 If $n\geq 5$, then the image of $p$ is at least $(n^2-n+3)$-dimensional.
\end{theorem}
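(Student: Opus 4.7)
The plan is to refine the argument of Theorem~\ref{no-mpc}: assuming for contradiction that $\dim \Image p \le n^2-n+2$, Remark~\ref{dim-f1} forces $\dim \Image f \le 2$ for every $f$ built from a diagonal $p$-value, so by Remark~\ref{remharmonic} there exist matrix units with $p(a_1,\ldots,a_m) = h_k e_k + h_l e_l$ having exactly two nonzero harmonic coefficients $h_k, h_l$. Since this diagonal is non-scalar, some adjacent pair of entries differs, and a cyclic relabeling of the indices (which sends matrix units to matrix units and preserves the harmonic support) lets us assume $c_0 \ne c_1$.

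The new tool is a \emph{swap}: for a transposition $\sigma=(i\ i')$, conjugating all the $a_r$ by the corresponding permutation matrix gives matrix units $\tilde a_r$ with $p(\tilde a_1,\ldots,\tilde a_m) = \diag(c_{\sigma(0)},\ldots,c_{\sigma(n-1)})$; a direct Fourier computation yields
$$h'_s = h_s + \frac{(c_{i'}-c_i)(\varepsilon^{-is}-\varepsilon^{-i's})}{n}.$$
Applied to the swap $(0,1)$: for each $s\in\{1,\ldots,n-1\}\setminus\{k,l\}$ we have $h_s=0$ while $(c_1-c_0)(1-\varepsilon^{-s})\ne 0$, forcing $h'_s\ne 0$. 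The count of such $s$ is $n-3$ if $0\notin\{k,l\}$ and $n-2$ otherwise. Thus for $n\ge 6$, or in any case where $0\in\{k,l\}$, we already obtain $\dim \Image\tilde f\ge 3$, contradicting the assumption.

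The delicate residual case is $n=5$ with $k,l\in\{1,2,3,4\}$. Here the $(0,1)$-swap forces only two $h'_s$-values automatically nonzero, and maintaining $\dim \Image\tilde f=2$ requires $h'_k=h'_l=0$, producing the equation $h_k/h_l = R_1 := (1-\varepsilon^{-k})/(1-\varepsilon^{-l})$. The analogous analysis of the swap $(0,j)$, for each $j\in\{2,3,4\}$ with $c_0\ne c_j$, yields $h_k/h_l = R_j := (1-\varepsilon^{-jk})/(1-\varepsilon^{-jl})$. Using the identity $1+\alpha+\alpha^2+\alpha^3=-\alpha^{-1}$ for a fifth root of unity $\alpha\ne 1$, short manipulations show that $R_j=R_1$ is equivalent to $\varepsilon^k=\varepsilon^l$ when $j=2$ or $j=4$, and to $\varepsilon^k+\varepsilon^l=-1$ when $j=3$. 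The first is excluded since $k\ne l$; the second is impossible since no two distinct nontrivial fifth roots of unity sum to $-1$ (the only real candidates are $2\cos(2\pi/5)$ and $2\cos(4\pi/5)$, neither equal to $-1$). Hence whenever some $j\in\{2,3,4\}$ satisfies $c_0\ne c_j$, at least one swap yields $\dim \Image\tilde f\ge 3$. The only remaining possibility is $c_0=c_2=c_3=c_4$, giving the diagonal $(c_0,c_1,c_0,c_0,c_0)$, whose harmonic coefficients simplify to $h_s=(c_1-c_0)\varepsilon^{-s}/5\ne 0$ for all $s\in\{1,2,3,4\}$, again contradicting the assumption of only two nonzero $h_s$.

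The main obstacle is the small case $n=5$: for $n\ge 6$ a single swap and the elementary count suffice, but for $n=5$ one must combine information from several swaps and invoke specific arithmetic of fifth roots of unity (in particular, the impossibility of $\varepsilon^k+\varepsilon^l=-1$ for distinct $k,l\in\{1,\ldots,4\}$); one should also be careful to verify that harmonic support is preserved under cyclic relabeling and that the ratio calculations are robust to the various subcases in which some $c_0=c_j$ holds.
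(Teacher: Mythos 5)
Your proof is correct and rests on the same core mechanism as the paper's: the harmonic (Fourier) decomposition of the diagonal value, the observation that a two-dimensional $\Image f$ forces at most two nonzero harmonic coefficients, the permutation/swap idea to generate new diagonal values, and ratio equations in $\varepsilon^{-k},\varepsilon^{-l}$ leading to $k=l$. The differences are in packaging and thoroughness. The paper works with the explicit matrices $q_k$ and $r_k$, considers only the swaps $(0,1)$ and $(0,2)$, and derives the single comparison $\frac{1-\varepsilon^{-k}}{1-\varepsilon^{-l}}=\frac{1-\varepsilon^{-2k}}{1-\varepsilon^{-2l}}$; this tacitly requires $c_0\ne c_2$ (otherwise the $(0,2)$ swap is trivial and yields no constraint), and the paper does not note that a cyclic shift can always arrange both $c_0\ne c_1$ and $c_0\ne c_2$ for a non-constant 5-cycle. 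Your version avoids this by deriving the general swap formula $h'_s = h_s + \frac{(c_{i'}-c_i)(\varepsilon^{-is}-\varepsilon^{-i's})}{n}$, sweeping through all transpositions $(0,j)$, and explicitly disposing of the residual case $c_0=c_2=c_3=c_4$ (where the original value already has four nonzero harmonic coefficients). The price is that the swap $j=3$ contributes the extra relation $\varepsilon^k+\varepsilon^l=-1$, which you correctly rule out by fifth-root-of-unity arithmetic; the paper never encounters this relation because it goes only through $j=2$. Also worth noting: your count that $\{1,\dots,n-1\}\setminus\{k,l\}$ already has $n-2$ elements when $0\in\{k,l\}$ cleanly disposes of that subcase for $n=5$, whereas the paper folds this into its "five distinct possibilities" argument without comment. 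In short, the approaches are the same in substance; yours is a somewhat more robust and self-contained variant of the paper's argument.
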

\begin{proof}
 The polynomial $p$ is neither PI nor central thus
 there exist matrix units $a_1,\dots, a_m$ such that $p(a_1,\dots,a_m)=\diag\{c_0,\dots,c_{n-1}\}$ is diagonal but not scalar.
 Assume that the image of $p$ is at most $(n^2-n+2)$-dimensional.
 As we showed in Remark \ref{remharmonic}, the matrix $\diag\{c_0,\dots,c_{n-1}\}$
 can be written as $\alpha e_k+\beta e_l$,
which is not scalar. Without loss of generality we may assume that
$c_0\neq c_1$
 (because there exists $r$ such that
 $c_r\neq c_{r+1})$, and we now consider the matrix $$\diag\{c_r,c_{r+1},\dots,c_{n-1}\,c_0,c_1,\dots,c_{r-1}\}$$ instead of our.
with its different coefficients   $\tilde
\alpha=\varepsilon^{rk}\alpha$ and $\tilde \beta
=\varepsilon^{rl}\beta $).

We define the matrices
$q_k:=\diag\{\varepsilon^k,1,\varepsilon^{2k},\varepsilon^{3k},\dots,
 \varepsilon^{(n-1)k}\}.$
 Switching the indices 1 and 2,  we obtain
 matrix units $\tilde a_i$ such that
 $$p(\tilde a_1, \dots,\tilde a_m)=\diag\{c_1,c_0,c_2,c_3,\dots,c_{n-1}\} = \alpha q_k+\beta
 q_l.$$
By Remark~\ref{remharmonic}, $\alpha q_k+\beta  q_l$ also can be written as a linear combination of two elements
  of the base $e_s$
 (say, $\tilde \alpha e_{\tilde k}+\tilde \beta e_{\tilde l}$).
 Note that
 $$\langle q_k,e_s\rangle=\varepsilon^k+\varepsilon^{-s}-1-\varepsilon^{k-s}=
 (\varepsilon^k-1)(1-\varepsilon^{-s})+ \langle  e_k,e_s\rangle .$$

Thus, if $k\neq s$, then $$\langle
q_k,e_s\rangle=\varepsilon^k+\varepsilon^{-s}-1-\varepsilon^{k-s}=
 (\varepsilon^k-1)(1-\varepsilon^{-s})$$ since $\langle
 e_k,e_s\rangle= 0,$
 and if $k=s$ then $$\langle q_k,e_s\rangle=(\varepsilon^k-1)(1-\varepsilon^{-s})+n.$$
 Hence, if $s\notin\{k,l\}$ then $$\langle \alpha q_k+\beta q_l,e_s\rangle=(1-\varepsilon^{-s})(\alpha(\varepsilon^k-1)+\beta (\varepsilon^l-1)).$$
 We denote $\delta =\alpha(\varepsilon^k-1)+\beta (\varepsilon^l-1).$
 Recall  that  $c_1\neq c_0$, and thus $\delta \neq 0$. Therefore either $s=\tilde k$, or else $s=\tilde l$, or
 $\langle \alpha q_k+\beta q_l,e_s\rangle=0$ (and thus
 $s$ is either $k$, or $l$, or $1-\varepsilon^{-s}=0$ (and thus $s=0$) - only five possibilities.
 But for $n\geq 6$ there are at least three nonzero coefficients, a contradiction.

 Thus we may assume that $n=5$.
 We have exactly five possibilities for $s$, which therefore must be distinct.
 Therefore the $k$-th and $l$-th coefficients of $\alpha
  q_k+\beta q_l$ will be zero, i.e.,
 $$(1-\varepsilon^{-k})\delta +5\alpha=(1-\varepsilon^{-l})\delta +5\beta =0,$$
 where $\delta =\alpha(\varepsilon^k-1)+\beta (\varepsilon^l-1).$
 In particular $$\frac{\alpha}{\beta }=\frac{1-\varepsilon^{-k}}{1-\varepsilon^{-l}}.$$
 Now let us take matrix units $a_i'$ such that $p(a_1',\dots,a_m')=\diag\{c_2,c_1,c_0,c_3,c_4\}.$
 Then $\alpha r_k+\beta r_l$ can also be written as a linear combination of two of the $e_s$, where
 $r_k=\diag\{\varepsilon^{2k},\varepsilon^k,1,\varepsilon^{3k},\varepsilon^{4k}\}.$
If $k\neq s$, then $$\langle
r_k,e_s\rangle=\varepsilon^{2k}+\varepsilon^{-2s}-1-\varepsilon^{2k-2s}=
 (\varepsilon^{2k}-1)(1-\varepsilon^{-2s}).$$
 We perform the same calculations as before, and  obtain
 $$\frac{\alpha}{\beta }=\frac{1-\varepsilon^{-2k}}{1-\varepsilon^{-2l}}.$$
 Hence $$\frac{1-\varepsilon^{-k}}{1-\varepsilon^{-l}}=
 \frac{1-\varepsilon^{-2k}}{1-\varepsilon^{-2l}},$$ implying
   $$\frac{1+\varepsilon^{-k}}{1+\varepsilon^{-l}}=1,$$ and hence $k=l$, a contradiction.
  \end{proof}

\begin{theorem}
 Let $p$ be any multilinear polynomial evaluated on $4\times 4$ matrices, which is neither PI nor central. 
 Assume that $\Char K \ne 2$. 
 Then  $\dim \Image p \ge 14,$ equality holding only if the following conditions are satisfied:

 \begin{itemize}
\item For any matrix units $a_i$, if $p(a_1,\dots,a_m)$ is diagonal 
 then it has eigenvalues $(c,c,-c,-c)$ for some $c$.
\item
Any value of $p$ has eigenvalues 
 $(\lambda_1,\lambda_2,-\lambda_1,-\lambda_2).$ 
\end{itemize}
\end{theorem}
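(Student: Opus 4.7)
The plan is to follow the transposition analysis of Theorem~\ref{harmonic-thm}, specialized to $n=4$, and then upgrade the resulting matrix-unit conclusion to an eigenvalue condition on all values of $p$ via a dimension-and-symmetry argument in the characteristic polynomial space. The lower bound $\dim\Image p\geq 14=n^2-n+2$ is already contained in Theorem~\ref{no-mpc}, so only the equality case needs attention. Assume $\dim\Image p=14$ and pass to the algebraic closure, so that a primitive fourth root of unity $\varepsilon=i$ is available. Since $p$ is neither PI nor central, one can extract matrix units $a_i$ with $p(a_i)=\diag(c_0,c_1,c_2,c_3)$ diagonal and non-scalar; Remark~\ref{dim-f1} combined with Theorem~\ref{no-mpc} (which rules out the power-central case $\dim\Image f=1$) then forces $\dim\Image f=2$. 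By Remark~\ref{remharmonic} one writes $p(a_i)=\alpha e_k+\beta e_l$ with $\alpha\beta\neq 0$, and the $2$-dimensional subspace of diagonal matrices inside $\Image p$ equals $\mathrm{span}(e_k,e_l)$.

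For the first condition, I would apply the transposition trick of Theorem~\ref{harmonic-thm} with $\sigma_{1r}$ for $r=2,3,4$: each conjugation yields another matrix-unit evaluation $\sigma_{1r}\,p(a_i)\,\sigma_{1r}^{-1}\in\Image p$, which remains diagonal and is hence forced to lie in $\mathrm{span}(e_k,e_l)$. The inner product formula
\[
\langle\sigma_{1r}(e_k),e_s\rangle=(1-\varepsilon^{-(r-1)s})(\varepsilon^{(r-1)k}-1),\quad k\ne s,
\]
translates each swap into the linear relation $\delta_r=\alpha(\varepsilon^{(r-1)k}-1)+\beta(\varepsilon^{(r-1)l}-1)=0$ whenever some $s\notin\{k,l\}$ satisfies $\varepsilon^{(r-1)s}\ne 1$. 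Enumerating the choices $\{k,l\}\subseteq\{0,1,2,3\}$, the relation $\delta_2=0$ rules out $0\in\{k,l\}$, and $\delta_3=0$ further eliminates $\{k,l\}=\{1,2\}$ and $\{2,3\}$; only $\{k,l\}=\{1,3\}$ with $\beta=i\alpha$ survives. A direct computation then gives $\alpha(e_1+ie_3)=\alpha(1+i)\diag(1,1,-1,-1)$, whose eigenvalues are $(c,c,-c,-c)$ with $c=\alpha(1+i)$, yielding condition~(1).

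For the second condition, I would combine two observations. First, by condition~(1) and Lemma~\ref{graph}, every matrix-unit evaluation of $p$ is traceless, so multilinearity of $\tr(p)$ in $x_1,\dots,x_m$ forces $\tr(p)\equiv 0$ and $\Image p\subseteq\sl_4(K)$. Second, multilinearity in any single argument gives $p(\dots,-a_i,\dots)=-p(\dots,a_i,\dots)$, so $\Image p$ is closed under $M\mapsto-M$; hence its image $\Sigma$ in the characteristic polynomial space $\mathbb{A}^4$ is an irreducible $2$-dimensional constructible subset lying in $\{a_1=0\}$ and invariant under $(a_2,a_3,a_4)\mapsto(a_2,-a_3,a_4)$. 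By Theorem~\ref{thmB1}, $\Sigma$ contains the line $\{(0,0,0,-c^4)\}$ arising from eigenvalues $(c,ic,-c,-ic)$, and by condition~(1) it contains the parabola $\{(0,-2c^2,0,c^4)\}$ arising from $(c,c,-c,-c)$. Combining irreducibility, the dimension count, and the $a_3\mapsto-a_3$ symmetry forces $\Sigma$ to be the plane $\{a_1=a_3=0\}$, so every value of $p$ has characteristic polynomial of the form $x^4+a_2x^2+a_4$, equivalently eigenvalues $(\lambda_1,\lambda_2,-\lambda_1,-\lambda_2)$.

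The hardest step will be the case analysis in the second paragraph, where one has to verify carefully when each swap $\sigma_{1r}$ contributes a genuine constraint versus when it is automatic, and confirm that the surviving ratio $\beta=i\alpha$ is consistent with all of them. A second delicate point is the variety step for condition~(2): one must rule out alternative $2$-dimensional irreducible subvarieties of $\{a_1=0\}$ (for instance, $\{a_3^2=a_2(a_4-a_2^2/4)\}$) that likewise contain both curves and are invariant under $a_3\mapsto-a_3$, which is where the scalar-multiplication invariance $M\mapsto cM$ coming from multilinearity should be brought to bear.
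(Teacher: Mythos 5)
Your argument for the first bullet has a genuine gap. You assert that after conjugating by $\sigma_{1r}$, the new diagonal evaluation $\sigma_{1r}\,p(a_i)\,\sigma_{1r}^{-1}$ ``is hence forced to lie in $\mathrm{span}(e_k,e_l)$,'' and from this you derive the linear constraints $\delta_r=0$ and conclude that only $\{k,l\}=\{1,3\}$ with $\beta=i\alpha$ survives. But Remark~\ref{remharmonic} does not give you that: it only says that \emph{each} diagonal matrix-unit evaluation of $p$ is a combination of at most two $e_s$'s, with a \emph{possibly different} pair of indices for each evaluation. There is no reason the transposed value must land in the same $2$-dimensional span as $p(a)$; the linear space $\Image f$ is attached to the specific matrix units $a_i$, and $\sigma_{1r}a_i\sigma_{1r}^{-1}$ is a different tuple. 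The paper's own proof respects this: in the case $p(a)=\alpha e_1+\beta e_2$, the transposed value expands as a combination of $e_1,e_2,e_3$ (an $e_3$ term appears, which is outside the original pair), and the condition is that \emph{one} of those three coefficients vanish --- yielding $\alpha\in\{0,\pm(1+i)\beta\}$ --- not that the value stay inside $\mathrm{span}(e_1,e_2)$. This weaker condition admits the surviving possibility $\alpha=0$ (so $p(a)=\beta e_2=\diag\{\beta,-\beta,\beta,-\beta\}$), which your fixed-span constraint erroneously kills. The paper must then run a three-way case analysis ($\{1,2\}$, $\{1,3\}$, $\{2,3\}$) and discover that in each surviving subcase the eigenvalues are $(c,c,-c,-c)$; the surviving diagonal arrangements genuinely differ even though the eigenvalue multiset is the same, so the elimination you propose is too aggressive.

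For the second bullet, your route through the characteristic-polynomial variety $\Sigma\subset\mathbb{A}^4$ is a genuinely different idea from the paper's, which simply observes that once some matrix-unit evaluation equals $\diag\{c,c,-c,-c\}$, the associated $\Image f$ (being $2$-dimensional and linear, and containing the cyclic shift $\diag\{-c,c,c,-c\}$) is exactly $\{\diag(\lambda_1,\lambda_2,-\lambda_1,-\lambda_2)\}$, so $\Image p$ contains the full $14$-dimensional variety of matrices with eigenvalues of that form, and equality $\dim\Image p=14$ forces $\Image p$ to be exactly that variety. Your approach would need to rule out other $2$-dimensional weighted-homogeneous, $a_3$-even subvarieties of $\{a_1=0\}$ containing both reference curves --- as you acknowledge, this is not done, and it is not clear that the scaling symmetry alone suffices (for instance, a cone over a curve through both reference points would also be scaling-invariant). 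The paper's argument sidesteps this entirely by producing the variety directly.
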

\begin{proof} First note that $4^2-4+2 = 14,$ so  $\dim \Image p \ge 14.$
 Assume that $p$ is a multilinear polynomial evaluated on $4\times 4$ matrices with $14$-dimensional image.
 Let $a_1,\dots,a_m$ be any matrix units such that $p(a_1,\dots,a_m)$ is diagonal but not scalar.
 Let $p(a_1,\dots,a_m)=\diag\{c_0,c_1,c_2,c_3\}$ and $c_0\neq c_1$.
 We  use the same notation as in the proof of Theorem \ref{harmonic-thm}.
 Recall that $e_k=\diag\{1,i^k,i^{2k},i^{3k}\}$ and $q_k=\diag\{i^k,1, i^{2k},i^{3k}\}$.
 As     in the proof of Theorem \ref{harmonic-thm},
 $\langle \alpha q_k+\beta q_l , e_s\rangle= \delta (1-i^{-s})$ if $s\notin\{k,l\}$,
 or $\delta (1-i^{-s})+4\alpha $ if $s=k$ and 
 $\delta (1-i^{-s})+4\beta $ if $s=l$.
 Therefore  $k$ and $l$ are nonzero (for otherwise we have two nonzero possibilities for $s\notin \{k,l\}$ 
 and one other nonzero coefficient would be zero: $\langle \alpha q_k+\beta q_l , e_0\rangle=4\alpha$ (if we assume $k=0$ without loss of 
 generality).
 Therefore $p(a_1,\dots,a_m)$ belongs to the linear span $\langle e_1,e_2,e_3\rangle$.
 Hence we have three options: 
 \begin{itemize}
\item $p(a_1,\dots,a_m)=\alpha  e_1+\beta e_2$,
\item $p(a_1,\dots,a_m)=\alpha  e_1+\beta e_3$,
\item $p(a_1,\dots,a_m)=\alpha  e_3+\beta e_2$.
\end{itemize}
We will not treat the last case since its calculations are  as in the first case.
Let us consider the first case $p(a_1,\dots,a_m)=\alpha e_1+\beta e_2$.
Therefore $p(\tilde a_1,\dots,\tilde a_m)=\alpha q_1+\beta q_2$ which can be written explicitly as
$$\left(\frac{1}{2}\alpha-\frac{1+i}{2}\beta\right)e_1+\frac{i-1}{2}\alpha e_2+\left(\frac{i}{2}\alpha+\frac{i-1}{2}\beta\right)e_3,$$
thus $\alpha\in\{0,(1+i)\beta,-(1+i)\beta\}.$
If $\alpha=(1+i)\beta$ then $p(a_1,\dots,a_m)=\beta\diag\{i-2,i+2,-i,-i\}$.
If $\alpha=-(1+i)\beta$ then $p(a_1,\dots,a_m)=\beta\diag\{-i,-i,i+2,i-2\}$.
In both cases $\alpha=\pm (1+i)\beta$, so there are matrix units $\tilde a_i$ such that 
$p(\tilde a_1,\dots,\tilde a_m)=\diag\{-i,i+2,-i,i-2\}$ which can be written explicitly as
$-ie_1-ie_2+ie_3$, and we have three nonzero coefficients. We conclude that the image is at least $15$-dimensional.
If $\alpha=0$, then the value $p(a_1,\dots,a_m)$ has eigenvalues $(c,c,-c,-c)$ as in the conditions of the Theorem.

Assume now $p(a_1,\dots,a_m)=\alpha e_1+\beta e_3$.
Therefore $p(a_1,\dots,a_m)=\diag\{x,y,-x,-y\}.$
Then we consider $p(\tilde a_1,\dots,\tilde a_m)=\diag\{x,-x,y,-y\}$ which can be written explicitely as
$$\frac{(x-y)(1+i)}{4}e_1+\frac{x+y}{2}e_2+\frac{(x-y)(1-i)}{4}e_3,$$
therefore $x=\pm y$ and once again we have a matrix from the conditions of the theorem.

Therefore there is a set of matrix units $a_i$ with $p(a_1,\dots,a_m)=\diag\{c,c,-c,-c\}.$
Now we construct the mapping $f$ and the image will be $2$-dimensional if and only if it is the set 
$\diag\{\lambda_1,\lambda_2,-\lambda_1,-\lambda_2\}.$
Therefore  $\Image p$ contains all the matrices with such eigenvalues, which is a $14$-dimensional variety.
Hence,  if $\dim\Image p = 14,$ then $\Image p$ is exactly this variety.
\end{proof}

\section{Open problems}

The following questions remain from this paper:
\begin{prm}\label{3s}
Does there actually exist a multilinear 3-central polynomial for
$n=3$? We can prove that these do not exist (although homogenous
ones do exist).
\end{prm}

\begin{prm}\label{3s}
Does there exist an $n$-central polynomial (not multilinear) for
$n>3$ prime? This would answer the celebrated cyclicity question
for division algebras of prime degree.
\end{prm}

\end{document}